\documentclass[11pt,english]{article}
\usepackage[T1]{fontenc}
\usepackage[latin9]{inputenc}
\setlength{\parskip}{0.25em}
\setlength{\parindent}{0pt}
\usepackage{amsmath}
\usepackage{amsthm}
\usepackage{amssymb}

\makeatletter
\theoremstyle{plain}
\newtheorem{thm}{\protect\theoremname}
\theoremstyle{plain}
\newtheorem{prop}[thm]{\protect\propositionname}
\theoremstyle{plain}
\newtheorem*{thm*}{\protect\theoremname}


\usepackage{mathrsfs}\usepackage{amsthm}\usepackage{amscd}

\usepackage{graphicx}
\usepackage{a4wide}
\@ifundefined{definecolor}
 {\usepackage{color}}{}
 

\usepackage{multirow}

\newcommand\automorphism{\mathrm{Aut}}
\usepackage{calc}
\usepackage{color, colortbl}

\addtolength{\textheight}{10mm} \addtolength{\topmargin}{-13mm}

\newcounter{EQNR}
\setcounter{EQNR}{0}

\makeatother

\usepackage{babel}
\providecommand{\propositionname}{Proposition}
\providecommand{\theoremname}{Theorem}

\begin{document}

\title{A natural graph of finite fields distinguishing between models}

\author{\date{September 25, 2020}Anders Karlsson and Gaëtan Kuhn \thanks{Supported in part by the Swiss NSF.}}
\maketitle
\begin{abstract}
We define a graph structure associated in a natural way to finite
fields that nevertheless distinguishes between different models of
isomorphic fields. Certain basic notions in finite field theory have interpretations in terms of standard graph properties. 
We show that the graphs are connected and provide an estimate of their diameter. An accidental graph isomorphism
is uncovered and proved. The smallest non-trivial Laplace eigenvalue is given some attention, in particular for a specific family of 
$8$-regular graphs showing that it is not an expander. We introduce a regular covering graph and show that it is connected if and only if the root is
primitive.
\end{abstract}

\section{Introduction}

Up to isomorphism there is exactly one field of cardinality $q$ which
must be of the form $p^{k}$ for a prime $p$ and integer $k>0$,
but as is well-known the isomorphisms are not canonical. There is
therefore an issue of which concrete model of a specific finite field
to take. This is a matter of considerable practical concern because
of the many applications of finite fields where the speed of computation
is of paramount importance \cite[Ch. 11]{H13}.

Chung \cite{Ch89} and Katz \cite{Ka90} defined a family of graphs
from models of finite field mostly with the idea of producing interesting
graphs or proving interesting properties of the graphs, for example
estimating the diameter, using deep results in number theory notably
on character sums or the Lang-Weil theorem. In this direction we refer
to a more recent paper \cite{LWWZ14} for a generalization of their
construction. Prior to the well-cited papers of Chung and Katz, there
are other types of graphs associated with fields, the oldest being
the Paley graphs where one starts from the additive group of the field
and take as generating set for the corresponding Cayley graphs the
elements which are not squares, thus making a connection to the topic 
of quadratic reciprocity. Very recent general constructions in yet different
directions are the functional and equational graphs in \cite{K16, MSSS20}.

The present paper instead mainly aims at studying the fields themselves from associated
graph structures. Our starting point is to take a field $K$ of characteristic
$p$ and cardinality $q=p^{k}$ given as $F_{p}[x]/(f(x))$ where
$f$ is an irreducible polynomial of degree $k$. We will refer to
this as a model $K_{f}$ for the finite field of $q$ elements. We
asked ourselves the following question: \emph{Can one define a graph
associated to $K_{f}$ in a canonical way, so that field automorphisms
are graph automorphisms but which nevertheless can distinguish the
different models of two isomorphic fields?} In other words, field
automorphisms should be graph automorphisms but field isomorphisms
should not always be graph isomorphisms. The answer turns out to
be \emph{yes} (see Proposition \ref{prop:auto} and the table below),
although the theoretical aspects of this phenomenon are only partially
clear. The relations between the properties of our graphs and questions
of efficient computing are also left for future investigation. For
example, Conway polynomials have been used to define finite fields
in some computer algebra systems, are their properties in any way
reflected by the corresponding graphs? 

One relevant property could be the size of the graph automorphism
group. Most of our graphs have as automorphism group just the field
automorphisms with one extra involution. For example, out of the 150
irreducible monic polynomials of degree 4 in $F_{5}[x]$, only eight
of them have a larger automorphism group than 8 elements. The exceptional
orders are $32$, $32768$ and $\approx3\cdot10^{45}.$ This latter
really large automorphism group appears for the graphs associated
to the fields
\[
F_{5}[x]/(x^{4}+2)\textrm{ and \ensuremath{F_{5}[x]/(x^{4}+3)} }
\]
which moreover are isomorphic as graphs. See the figures below, where
the vertices are placed on a circle, and where 
the yellow parts correspond to the edges coming from addition and the blue from multiplication.

\begin{figure}[h]
\begin{minipage}[b]{0.50\textwidth} \label{fig1}
  		\begin{center}
			\includegraphics[scale=0.5]{./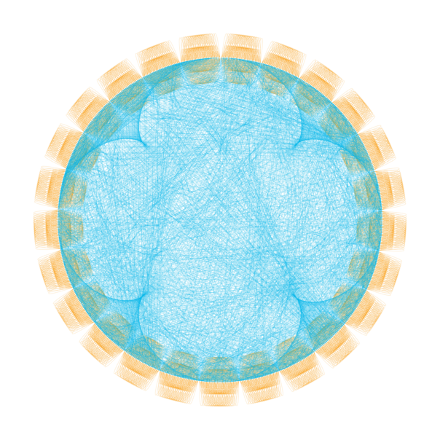}  \caption{$F_{5}[x]/(x^{4}+2)$}
		\end{center}
	\end{minipage}
	\begin{minipage}[b]{0.50\textwidth}
		\begin{center}
			\includegraphics[scale=0.5]{./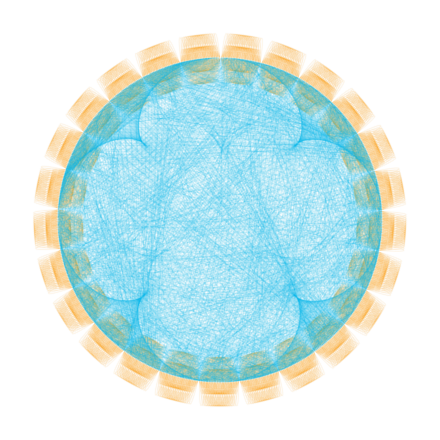} \caption{$F_{5}[x]/(x^{4}+3)$}
		\end{center} 
	\end{minipage} 
\end{figure}

Our graphs seem most of the time to distinguish between distinct models,
see the table at the end of the paper. 


As illustrated above there are exceptional isomorphisms, see also 
Proposition \ref{prop:isom}  where we give one proof of this phenomenon.
In fact, all the cases we know of can be explained with an isomorphism
coming from a reciprocal polynomial, mapping $x\mapsto x^{-1}$. This
is consistent with that the graph isomorphism classes seem to contain
at most two elements. In any case, the following general question
remains unanswered: \emph{Are all isomorphic graphs isomorphic via
a field isomorphism?} A positive answer would be remarkable.

The following result established below summarizes some basic properties:

\begin{thm*}
The graph associated to a field $K_f$ of $p^k$ elements with $k\geq 2$ is
connected, Eulerian, has girth $2$ and diameter at most $2p(2k+1)-2k-4$.
\end{thm*}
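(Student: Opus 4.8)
The plan is to read off all four properties directly from the two families of edges defining the graph: the additive edges, joining each $a$ to $a\pm 1$, and the multiplicative edges, joining each nonzero $a$ to $a\cdot x^{\pm 1}$ (here $x$ is the class of the variable in $K_f$), together with the edges coming from the conjugates of $x$, which only improve connectivity and may be discarded when proving upper bounds. The vertex set is all of $K_f$, so the object is a finite multigraph on $q=p^{k}$ vertices, and it suffices to argue with the two basic moves $a\mapsto a\pm 1$ and $a\mapsto x^{\pm 1}a$.

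Girth and the Eulerian property are immediate. Since $k\geq 2$ we have $x\notin F_{p}$, hence $x\neq 1$, and the element $a=(x-1)^{-1}$ satisfies $a+1=ax$; thus $a$ and $ax$ are joined by both an additive and a multiplicative edge, giving a $2$-cycle, while no loops occur because $a+c=a$ and $ax=a$ are impossible for $c\neq 0$ and $a\neq 0$. So the girth is exactly $2$. For the Eulerian property I would check that every vertex has even degree: a generic vertex meets two additive edges ($a\pm 1$) and an even number of multiplicative edges (the multiplicative connection set is symmetric and, since $x^{2}\neq 1$ when $k\geq 2$, contains no involution), while the exceptional vertex $0$, fixed by every multiplication, meets only its two additive edges. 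Once connectivity is in hand, even degrees yield an Eulerian circuit.

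The core is connectivity, which I would prove by a closure/Horner argument that simultaneously drives the diameter bound. Let $R$ be the set of vertices reachable from $1$. Additive steps give $F_{p}\subseteq R$, and a multiplicative step followed by additive steps shows that $\beta\in R$ implies $x\beta+c\in R$ for every $c\in F_{p}$. Since every element of $K_f$ is an $F_p$-combination of $1,x,\dots,x^{k-1}$ and hence is produced by iterating $\beta\mapsto x\beta+c$ from $1$, we conclude $R=K_f$. The point to handle with care is that multiplicative steps exist only at nonzero vertices, so leading zeros are simply skipped; crucially, connectivity does not require $x$ to be primitive, because the additive steps move between the multiplicative cosets of $\langle x\rangle$.

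For the diameter I would make this quantitative. Writing a target $b=\sum_{i=0}^{k-1}c_{i}x^{i}$ with $c_{i}\in\{0,\dots,p-1\}$, one reaches $b$ from the base vertex using $k$ blocks of additive steps (at most $p-1$ each) interleaved with $k-1$ multiplications by $x$, and the reverse ``stripping'' procedure, using $x^{-1}$, bounds the distance from any $a$ back to the base. Combining the two by the triangle inequality and tracking the worst case — the extra cost at the vertex $0$ and the wrap-around inside an additive $p$-cycle — yields the stated bound $2p(2k+1)-2k-4$. I expect the main obstacle to be exactly this bookkeeping: one must verify that the worst-case element still costs only $O(pk)$ steps even when $x$ has small multiplicative order, so that no single coefficient or coset change forces a long detour, and then arrange the accounting so that the constants collapse to the claimed expression.
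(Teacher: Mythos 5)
Your proposal founders on a misreading of the graph's definition, and this single error undermines three of the four properties. In $X_f$ the additive edges join $y$ to $y+s$ where $s$ ranges over $S=\{x,x^{p},\dots,x^{p^{k-1}}\}$; there is \emph{no} edge between $a$ and $a\pm 1$. Indeed every element of $S$ is a root of the irreducible polynomial $f$ of degree $k\geq 2$, so $S\cap F_p=\emptyset$ and in particular $1\notin S$ (your multiplicative moves $a\mapsto ax^{\pm1}$ are genuine edges, but your additive moves are not). Consequently: (i) your girth-$2$ cycle fails, since $a=(x-1)^{-1}$ does satisfy $a+1=ax$, but $a\sim a+1$ is not an edge; the paper instead solves $a+x=a\cdot x$, giving $a=x(x-1)^{-1}$, so that the additive edge to $a+x$ and the multiplicative edge to $ax$ join the same pair of vertices. (ii) Your connectivity argument collapses at its first step: ``additive steps give $F_p\subseteq R$'' is false, and the Horner iteration $\beta\mapsto x\beta+c$ with arbitrary $c\in F_p$ is not realizable by graph moves, because the only constants one can add are elements of $S$ and their negatives. (iii) The diameter bookkeeping inherits both problems and is anyway left as a sketch; moreover your remark that the edges coming from the conjugates of $x$ ``may be discarded when proving upper bounds'' is the opposite of what the paper's argument needs.

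The paper's repair of the Horner idea is the actual content of the proof. Since the available additive increments are multiples of elements of $S$, one runs the Horner scheme on $x\cdot b$ rather than on $b$: from $0$ reach $u_{k-1}x$ by repeatedly adding $x$, multiply by $x$, add $u_{k-2}x$, and so on, arriving at $u_0x+u_1x^2+\cdots+u_{k-1}x^k=xb$ in at most $k(p-1)+(k-1)$ steps, every additive increment being a multiple of $x$ and hence a legitimate block of edges. One must then pass from $xb$ to $b$, which the paper does by multiplying by $x^{n-1}$, where $n$ is the multiplicative order of $x$ (this keeps the path valid in the directed graph as well); it is precisely here that the conjugate edges are indispensable: writing $n-1$ in base $p$ and using that multiplication by $x^{p^{j}}$ is a single edge, this leg costs at most $(\log_p(n-1)+1)(p-1)<(k+1)(p-1)$ steps, whereas with multiplications by $x$ alone it could cost on the order of $p^k$ steps. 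Doubling the bound for a path through $0$ gives $2\bigl((k+1)(p-1)+k(p-1)+(k-1)\bigr)=2p(2k+1)-2k-4$, which is the claimed diameter estimate. Only your Eulerian paragraph survives in spirit (connectivity plus even degrees is also the paper's argument), though the degrees are $4k$ at a generic vertex and $2k$ at $0$, coming from the $k$ additive and $k$ multiplicative generators traversed in either direction, not the counts in your two-generator model.
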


In contrast to Cayley graphs of abelian groups, the spectrum of our graphs 
associated to finite fields is non-trivial to determine due to that both addition and 
multiplication are taken into account. Nevertheless, for $F_p[x]/(x^2+1)$ with prime $p\equiv 3 \mod 4$
we 
could find an explicit part of the Laplace spectrum sufficient to conclude:

\begin{thm*}
The graphs of $F_p[x]/(x^2+1)$ with prime $p\equiv 3 \mod 4$ as $p\rightarrow \infty$ is not an expander family.
\end{thm*}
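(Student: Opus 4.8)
The plan is to exhibit, for every prime $p\equiv 3\pmod 4$, an explicit eigenvector of the graph Laplacian whose eigenvalue is nonzero but tends to $0$ as $p\to\infty$. Since the graph is connected (by the first theorem above), this smallest nontrivial eigenvalue is the genuine spectral gap, and its decay to $0$ is exactly the failure of the uniform lower bound that defines an expander family.

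First I would fix coordinates. As $p\equiv3\pmod4$ the polynomial $x^2+1$ is irreducible, $x^2=-1$ in $K=F_p[x]/(x^2+1)$, and writing $z=a+bx$ identifies the additive group with $F_p^2$. Multiplication by the root $x$ is then the order-$4$ rotation $\rho\colon(a,b)\mapsto(-b,a)$, so the multiplicative (blue) edges join each $z\ne0$ to $\rho^{\pm1}z$ and hence lie entirely inside the four-element orbits $\langle x\rangle z$; the additive (yellow) edges form a translation-invariant Cayley graph of $F_p^2$ with a fixed, $p$-independent connection set (the shifts by $\pm1,\pm x$). Splitting the Laplacian along the two colours as $L=L_{\mathrm{add}}+L_{\mathrm{mult}}$, the difficulty — and the reason the spectrum is not immediate — is that translations and the rotation $\rho$ do not commute, so $L_{\mathrm{add}}$ and $L_{\mathrm{mult}}$ have no obvious common eigenbasis and $L$ is not the Laplacian of a Cayley graph.

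The crux, and the step I expect to be the real obstacle, is to find a single function that is simultaneously an eigenfunction of $L_{\mathrm{add}}$ and is annihilated by $L_{\mathrm{mult}}$. Let $\zeta\in\mathbb C$ be a primitive $p$-th root of unity and let $\chi_{(s,t)}(a+bx)=\zeta^{sa+tb}$ be the additive characters, which diagonalize the translation-invariant $L_{\mathrm{add}}$. The rotation permutes them by $\rho^{*}\chi_{(s,t)}=\chi_{(t,-s)}$, so the four characters $\chi_{(\pm1,0)},\chi_{(0,\pm1)}$ form a single $\rho$-orbit and therefore share one $L_{\mathrm{add}}$-eigenvalue $\lambda_p=2-2\cos(2\pi/p)$. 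Their sum
\[
g(a+bx)=\cos(2\pi a/p)+\cos(2\pi b/p)
\]
is then an $L_{\mathrm{add}}$-eigenfunction with eigenvalue $\lambda_p$, while being manifestly $\rho$-invariant, hence constant on every multiplicative orbit $\langle x\rangle z$ and so killed by $L_{\mathrm{mult}}$. Consequently $Lg=\lambda_p\,g$ exactly, which is the promised explicit piece of the Laplace spectrum.

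It then remains only to verify that $g$ is an admissible witness: it is real, nonzero (e.g.\ $g(0)=2$), orthogonal to the constants (each cosine sums to $0$ over $F_p$), and $\|g\|^2=\Theta(p^2)$, so it represents a nontrivial eigenvalue. Since $\lambda_p=4\sin^2(\pi/p)\sim4\pi^2/p^2\to0$, the spectral gap collapses and the family is not an expander. The argument is robust: it is insensitive to the precise (finite, $\rho$-invariant, $p$-independent) additive connection set $S$, since the common eigenvalue is then $\sum_{(s_1,s_2)\in S}\bigl(1-\cos(2\pi s_1/p)\bigr)=O(1/p^2)$, and insensitive to exactly which powers of $x$ carry the blue edges, since any such edge stays inside an order-$4$ orbit on which $g$ is constant. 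Conceptually this matches the large, linear-in-$p$ diameter of the first theorem: the root of $x^2+1$ has multiplicative order only $4$, so the multiplicative edges form tiny cycles and cannot repair the poor, torus-like expansion of the underlying additive grid.
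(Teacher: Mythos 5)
Your overall strategy---producing an explicit function on which the Laplacian acts with an eigenvalue (or Rayleigh quotient) tending to $0$---is exactly the paper's, but your execution rests on a misidentification of the edge set, and the central claim ``$Lg=\lambda_p\,g$ exactly'' is false for the actual graph. The generating set is $S=\{x,x^{p}\}$, and since $p\equiv 3\pmod 4$ one has $x^{p}=(x^{2})^{(p-1)/2}x=-x$, so $S=\{\pm x\}=\{\pm i\}$: the additive edges shift only the second coordinate, $a+bx\mapsto a+(b\pm 1)x$, and there are \emph{no} additive edges given by shifts $\pm 1$. Hence the additive connection set is $\{\pm x\}$, which is \emph{not} $\rho$-invariant (multiplication by $x$ carries it to $\{\pm 1\}$), so $L_{\mathrm{add}}$ does not commute with $\rho^{*}$ and characters in a single $\rho$-orbit need not share an $L_{\mathrm{add}}$-eigenvalue. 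Concretely, $\chi_{(s,t)}$ has $L_{\mathrm{add}}$-eigenvalue $4-4\cos(2\pi t/p)$, depending only on $t$; thus $\chi_{(\pm 1,0)}$ has additive eigenvalue $0$ while $\chi_{(0,\pm 1)}$ has $4-4\cos(2\pi/p)$, and your function satisfies
\[
Lg(a+bx)=4\bigl(1-\cos(2\pi/p)\bigr)\cos(2\pi b/p),
\]
which is not a multiple of $g(a+bx)=\cos(2\pi a/p)+\cos(2\pi b/p)$. Your ``robustness'' remark does not rescue this, because its hypothesis ($\rho$-invariance of the additive connection set) is precisely what fails; likewise the closing heuristic is based on the wrong picture---the additive subgraph here is not a two-dimensional grid but a disjoint union of $p$ cycles of length $p$ (indeed $x$ is not normal for this model).

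The gap is reparable in two ways. The paper's repair: take the $\rho^{*}$-orbit of $\chi_{(l,l)}$ instead of $\chi_{(1,0)}$; along the orbit $\{(l,l),(l,-l),(-l,-l),(-l,l)\}$ the second coordinate has constant absolute value, so all four characters share the additive eigenvalue $4-4\cos(2\pi l/p)$, and their sum, the product $4\cos(2\pi la/p)\cos(2\pi lb/p)$, is still $\rho$-invariant and is a genuine eigenfunction of the full degree-$8$ Laplacian with eigenvalue $8\sin^{2}(\pi l/p)\to 0$. Alternatively, your $g$ can be salvaged as a test function rather than an eigenfunction: it is orthogonal to constants, and a direct computation gives $\langle Lg,g\rangle/\|g\|^{2}=4\sin^{2}(\pi/p)\to 0$, so the variational characterization of $\lambda_{1}$ yields the theorem. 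Either repair is necessary; as written, the proof is incorrect.
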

 
In the last section we inroduce a natural 
covering space of our graphs and show that it is connected if and only if $x$ is
primitve.
 

It is a pleasure to thank P{\"a}r Kurlberg for several helpful comments.

\section{A graph structure associated to finite fields}

We will not try to survey all possible constructions, beyond having
mentioned some of them briefly in the introduction, and instead we
go directly to what we suggest here. The basic data is a finite field $K_{f}$
of cardinality $q=p^{k}$ given as 
\[
K_{f}=F_{p}[x]/(f)
\]
 where $f$ is an irreducible polynomial of degree $k$ in $F_{p}[x]$.
Let $S$ be the subset 
\[
\left\{ x,x^{p},x^{p^{2}},...,x^{p^{k-1}}\right\} 
\]
of $K_{f}$, which is the conjugates of (the equivalence class of)
$x$, or in other words the orbit of $x$ under the Frobenius automorphism $y\mapsto y^{p}$.
Recall that the Frobenius map is a generator of the group of field
automorphism of $K_{f}$ which is the cyclic group of order $k$.

We now define our graph, which in a natural way is a directed graph
but we will mostly choose to forget the orientation. The vertex set
is the set $K_{f}$. The edges are of two types, corresponding to
the two field operations. For each vertices $y,z\in K_{f}$ and $s\in S$
we have a corresponding edge if
\[
z-y=s
\]
or 
\[
zy^{-1}=s.
\]
Note that for the latter type of edges neither $y$ nor $z$ can be
$0.$ This will have as a consequence that the graph is not regular
(i.e. not constant vertex degree). (An alternative definition is to instead look at the 
equation $z=sy$, which would give rise to several loops at $0$, on the other hand the graphs
would be regular.)  Each edge as above is directed
from $y$ to $z$. We denote the resulting directed graph $\overrightarrow{X_{f}}$
and undirected graph $X_{f}$.

\noindent {\bf Examples.} See the two figures, where the shades of orange correspond
to the additive edges, and the shades of blue the multiplicative ones.
(The shading distinguishes between the different elements in $S$).

\begin{figure}[h!]
\includegraphics[scale=0.4]{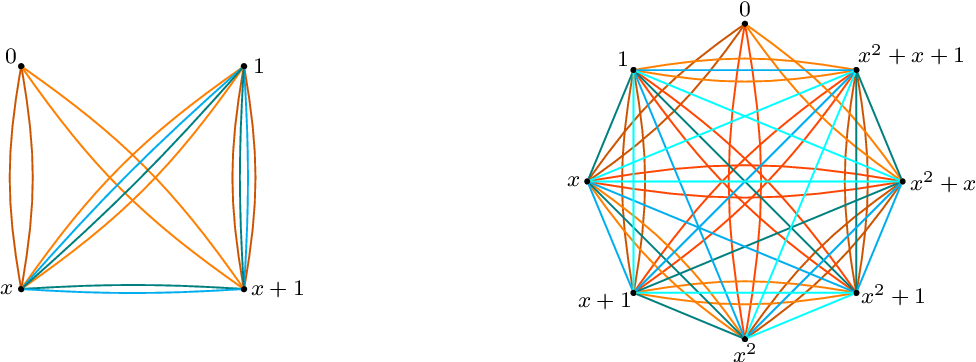} \centering \caption{$F_2[x]/(x^2+x+1)$ and $F_2[x]/(x^3+x+1)$} 
\end{figure}

As in the pictures we could consider the graph $X_{f}$ with the added
structure of coloring. Or, we could consider two subgraphs of $X_{f}$, 
namely the additive one (orange) and the multiplicative one (blue)
with $0$ removed. We call these the additive respectively the multiplicative
graph associated to $K_{f}$. While the graph $X_{f}$ is
not regular because of
the exceptional vertex $0$, the additive and multiplicative graphs
are however regular, in fact the constant vertex degree equals twice
the cardinality of $S,$ that is $2k$.

After a presentation by the second author, Pierre de la Harpe pointed out that this 
construction could be considered more
generally for rings. (One would then of course formulate 
the edge condition
for multiplicative edges without the inverse, i.e. $y$ and $sy$ are connected by a
multiplicative edge.)

\section{Automorphisms and isomorphisms}

For the purposes of this article we see the collection of all graph morphisms as being a subset of maps between vertex sets. This means for example 
that we do not distinguish the identity map from the automorphism which fixes all vertices but permutes a double edge.

The fundamental property we wanted at the outset was that our graph construction
is natural in the sense that every field automorphism is also a graph
automorphism: 
\begin{prop}
\label{prop:auto}Every field automorphism of $K_{f}$ defines also
a graph automorphism of $X_{f}$ and $\overrightarrow{X}_{f}$. 
\end{prop}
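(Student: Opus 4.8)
The plan is to exploit the single structural fact that every field automorphism of $K_f$ is a power of the Frobenius map $y \mapsto y^p$, and that this map carries the generating set $S$ to itself. Since the vertex set of $X_f$ is just $K_f$, any field automorphism $\sigma$ is automatically a bijection of the vertices, so the only thing to check is that $\sigma$ sends edges to edges. Bijectivity of the induced map on edges is then automatic, because $\sigma^{-1}$ is again a field automorphism and the same verification applies to it.

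First I would record that $\sigma(S) = S$. Indeed $\sigma = (y \mapsto y^p)^j$ for some $j$, and the Frobenius map cyclically permutes $S$: since $x^{p^k} = x$ we have $\{x^p, \ldots, x^{p^{k-1}}, x^{p^k}\} = S$, so every power of Frobenius permutes $S$ as a set. This is the only place where the specific choice of $S$ as the Frobenius orbit of $x$ is used, and it is precisely what makes the construction natural in the required sense.

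Next I would verify preservation of the two edge types separately. For an additive edge from $y$ to $z$ one has $z - y = s$ with $s \in S$; applying $\sigma$ and using additivity gives $\sigma(z) - \sigma(y) = \sigma(s)$, and since $\sigma(s) \in S$ this is again an additive edge, now from $\sigma(y)$ to $\sigma(z)$. For a multiplicative edge from $y$ to $z$ one has $z y^{-1} = s$ with $y, z \neq 0$ and $s \in S$; applying $\sigma$ and using multiplicativity together with $\sigma(y^{-1}) = \sigma(y)^{-1}$ gives $\sigma(z)\sigma(y)^{-1} = \sigma(s) \in S$, and since $\sigma$ fixes $0$ we still have $\sigma(y), \sigma(z) \neq 0$, so this is a multiplicative edge from $\sigma(y)$ to $\sigma(z)$.

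This already handles the directed graph $\overrightarrow{X}_f$, since throughout the edge is read from source $y$ to target $z$ and $\sigma$ respects that orientation; the statement for the undirected $X_f$ follows by forgetting orientations. I do not expect a genuine obstacle here, as the proposition amounts to a bookkeeping check that $\sigma$ is a ring homomorphism stabilizing $S$. The only point requiring a moment's care is the multiplicative case, where one must note that $\sigma$ preserves the set of nonzero elements, hence respects the exclusion of $0$ as an endpoint, and commutes with taking inverses.
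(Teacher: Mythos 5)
Your proposal is correct and follows essentially the same argument as the paper's own proof: both observe that any field automorphism is a power of the Frobenius map and hence permutes $S$, then use additivity and multiplicativity (together with $\sigma(0)=0$) to check that both edge types, with orientation, are preserved. Your additional remarks --- justifying $\sigma(S)=S$ via $x^{p^k}=x$ and noting that applying the same check to $\sigma^{-1}$ gives bijectivity on edges --- are fine refinements of the same approach.
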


\begin{proof}
Every automorphism $\phi$ is a power of the Frobenius map, therefore
it leaves the set $S$ stable, indeed permuting it. Being a field
automorphism $\phi$ respects all the field operations, so that for
every edge defined by $y,z\in K_{f}$ and $s\in S$, it holds that
\[
\phi(z)-\phi(y)=\phi(z-y)=\phi(s)\in S
\]
and
\[
\phi(z)\phi(y)^{-1}=\phi(zy^{-1})=\phi(s)\in S.
\]
Moreover, obviously $\phi(0)=0.$ All this means precisely that $\phi$
is a graph automorphism: it permutes the vertices in such a way that
edges map to edges, and in the present case $\phi$ even respects
the orientation of the edges. 
\end{proof}
On the other hand, as will be seen, field isomorphisms between distinct models 
are not necessarily
graph isomorphisms, indeed it seems typically not to be the case.
This is possible in particular because the generating sets $S$ may
not correspond under isomorphisms between the two models of the field. 

As mentioned in the introduction, sometimes the graph automorphism group
is much larger that the field automorphism group, see also 
the table in the appendix. But most of the time, the graph automorphisms are just double
in number compared with the field automorphisms thanks the the following involution (which
however is trivial in characteristic 2):
\begin{prop}\label{propinvolution}
The map $y\mapsto -y$ is an automorphism of $X_{f}.$
\end{prop}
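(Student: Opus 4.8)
The plan is to verify directly that $\phi\colon y\mapsto -y$ satisfies the defining properties of a graph automorphism, namely that it is a bijection of the vertex set and that it carries edges to edges. That $\phi$ is a bijection is immediate: it is an involution on $K_f$, being its own inverse, and it fixes the distinguished vertex $0$. It then remains only to check the two types of edges separately, exactly as in the proof of Proposition \ref{prop:auto}.

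For the multiplicative edges the verification is clean, and the orientation is even preserved: if $y,z\neq 0$ and $zy^{-1}=s\in S$, then the two sign changes cancel, since $(-z)(-y)^{-1}=zy^{-1}=s\in S$. Thus $\phi$ sends the multiplicative edge from $y$ to $z$ to the multiplicative edge from $-y$ to $-z$ labelled by the same $s$, and no issue arises here.

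The additive edges are where the only subtlety lies, and where I expect the main point to be. If $z-y=s\in S$, then computing in the image gives $(-z)-(-y)=-(z-y)=-s$, and there is no reason for $-s$ to lie in $S$; in fact generically it does not. Hence $\phi$ need \emph{not} preserve the directed additive edges, which is precisely why the statement is asserted for the undirected graph $X_{f}$ rather than for $\overrightarrow{X}_{f}$. For the undirected graph, however, one checks the reverse difference: $(-y)-(-z)=z-y=s\in S$, so that $\{-y,-z\}$ is an undirected additive edge of $X_{f}$ whenever $\{y,z\}$ is. Combining the two cases, $\phi$ maps every edge of $X_{f}$ to an edge, and being an involution it does so bijectively, which completes the argument. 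I would close by noting that in characteristic $2$ one has $-y=y$, so $\phi$ is the identity map, consistent with the remark preceding the statement that the involution is trivial there.
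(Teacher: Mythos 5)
Your proof is correct and follows essentially the same route as the paper's: a direct check that multiplicative edges are preserved (with the signs cancelling) and that additive edges are preserved only after reversing orientation, which is exactly why the claim is made for $X_f$ and not $\overrightarrow{X}_f$. Your added remarks on the characteristic-$2$ case and on the failure for the directed graph match the paper's own commentary, so there is nothing to correct.
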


\begin{proof}
The map clearly is a bijection on the level of vertices. Moreover,
for every edge defined by $y,z\in K_{f}$ and $s\in S$, it holds
that
\[
-z-(-y)=(y-z)=-s
\]
and so $(-y)-(-z)=s$ so the vertices are connected by an edge (but
here the orientation of the edge is reversed, so it is not an automorphism
of the directed graph). In addition,
\[
-z(-y)^{-1}=zy^{-1}=s.
\]
This shows that the map $y\mapsto-y$ is a graph automorphism.
\end{proof}

With computer experiments using the software Sage it seems that typically
the graphs are non-isomorphic for two different irreducible polynomials
of the same degree over the same finite prime field. See the table at the end 
taken from \cite{Ku20}. As can be seen, sometimes there are
however exceptional isomorphisms. We noticed that for at least some
of these examples the graph isomorphism comes from a field isomorphism
of the following kind: every element $a(x)$ in $K_{f}$ is sent to
$a(t^{-1})$ in $F_{p}[t]/(g(t))$, and the elements of $S$ in $K_{f}$
are mapped to the corresponding set of generators in $K_{g}$ or their
inverses additively and multiplicatively. Here is a proof of the first
non-trivial graph isomorphism appearing in the tables:

\begin{figure}[h!]
\includegraphics[scale=0.4]{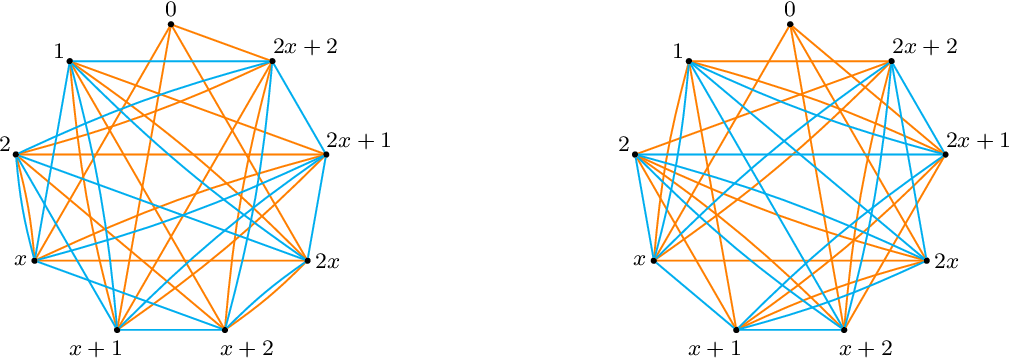} \centering \caption{$F_{3}[x]/(x^{2}+x+2)$ and $F_{3}[x]/(x^{2}+2x+2)$} 
\end{figure}

\begin{prop}
\label{prop:isom}The graphs associated to the fields $F_{3}[x]/(x^{2}+x+2)$
and $F_{3}[x]/(x^{2}+2x+2)$ are isomorphic.
\end{prop}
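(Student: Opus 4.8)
The plan is to exhibit the isomorphism induced by the reciprocal-polynomial field isomorphism $x\mapsto t^{-1}$ flagged in the discussion above, and then to check by hand that it carries each type of edge to an edge of the same type. Throughout write $p=3$, $k=2$, let $\alpha\in K_f$ denote the class of $x$ (a root of $f(x)=x^2+x+2$) and $\beta\in K_g$ the class of $t$ (a root of $g(t)=t^2+2t+2$); thus $S_f=\{\alpha,\alpha^3\}$ and $S_g=\{\beta,\beta^3\}$.

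First I would set up the field isomorphism. A direct check shows that $g$ is, up to a scalar, the reciprocal polynomial of $f$: namely $x^2 f(1/x)=2x^2+x+1$, whose monic normalization is exactly $g$. Consequently the roots of $g$ are the inverses of the roots of $f$, so $\beta^{-1}$ is a root of $f$ in $K_g$, and the assignment $\alpha\mapsto\beta^{-1}$ extends to a field isomorphism $\psi\colon K_f\to K_g$, concretely $a(x)\mapsto a(\beta^{-1})$. (Alternatively one verifies directly that $\beta^{-1}=\beta+2$ satisfies $f$.) Since $\psi$ is a field isomorphism it already respects $+$ and $\cdot$; what remains is purely the compatibility with the generating sets $S_f$ and $S_g$.

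Next I would compute the relevant sets. Because $\psi$ commutes with Frobenius, $\psi(S_f)=\{\beta^{-1},\beta^{-3}\}=S_g^{-1}$. The crux is the identity $S_g^{-1}=-S_g$, which I would obtain from the observation that the norm $\beta\cdot\beta^3=\beta^{1+p}$ equals the constant term of $g$, namely $2=-1$ in $F_3$; hence $\beta^{-1}=-\beta^3$ and $\beta^{-3}=-\beta$, giving $S_g^{-1}=-S_g$ (equivalently, $\beta$ generates $K_g^{\times}$ with $\beta^4=-1$). With these two facts the verification is immediate, and I would phrase it for the undirected graph, where $\{y,z\}$ is an additive edge iff $z-y\in S\cup(-S)$ and a multiplicative edge iff $zy^{-1}\in S\cup S^{-1}$. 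For an additive edge of $X_f$ one has $\psi(z)-\psi(y)=\psi(z-y)\in S_g^{-1}\cup(-S_g^{-1})=S_g\cup(-S_g)$ using $S_g^{-1}=-S_g$, so the image is again an additive edge; for a multiplicative edge, $\psi(z)\psi(y)^{-1}=\psi(zy^{-1})\in S_g^{-1}\cup S_g=S_g\cup S_g^{-1}$, again a multiplicative edge. Thus $\psi$ is a bijection of vertex sets carrying edges to edges of the same type, and by the symmetry of the situation its inverse does likewise, so $\psi$ is the desired graph isomorphism.

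The only genuine obstacle is the coincidence $S_g^{-1}=-S_g$: the reciprocal relationship alone guarantees that $\psi$ sends $S_f$ to $S_g^{-1}$, which handles the multiplicative edges for free, but the additive edges require $S_g^{-1}$ to lie in $S_g\cup(-S_g)$. This is exactly the accident that the norm of $\beta$ equals $-1$, and it is special to this pair of polynomials; I expect the real work to lie in isolating and verifying this norm condition (a one-line $F_3$ computation once correctly identified), the formal edge-chasing being routine.
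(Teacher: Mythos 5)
Your proof is correct, and the isomorphism you exhibit is the same one the paper uses: the substitution $a(x)\mapsto a(t^{-1})$ coming from the monic reciprocal relationship between the two polynomials. Where you differ is in how the compatibility with the generating sets is established. The paper verifies everything by explicit computation in $F_{3}$: it checks $t^{-1}=t+2$, computes $S_{f}=\{x,2x+2\}$ and $S_{g}=\{t,2t+1\}$, then sees directly that the image of $S_{f}$ is $\{t+2,-t\}=-S_{g}$ (so additive edges go to additive edges, orientation reversed) and that the set of inverses $\{(t+2)^{-1},(-t)^{-1}\}=\{t,2t+1\}=S_{g}$ (so multiplicative edges go to multiplicative edges). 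You derive the same two facts structurally: the image of $S_{f}$ is $S_{g}^{-1}$ because the field isomorphism commutes with Frobenius, and $S_{g}^{-1}=-S_{g}$ because the norm $\beta\cdot\beta^{3}$ equals the constant term $2=-1$ of $g$. This buys something the paper's computation leaves implicit: it isolates the norm condition as the precise accident that makes the \emph{additive} edges work, whereas the reciprocal relationship alone only handles the multiplicative ones. That is exactly the kind of ``proof analysis'' the paper alludes to when it says the proposition generalizes, and it is consistent with the paper's later observation that reciprocal pairs (e.g.\ $x^{3}+2x+2$ and $x^{3}+x^{2}+2$ over $F_{3}$) need not have isomorphic graphs: your criterion is the extra hypothesis that can fail. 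One caveat for the general statement you gesture at: for degree $k>2$ the norm is a product of $k$ conjugates, so the condition to isolate is the set identity $S_{g}^{-1}\subseteq S_{g}\cup(-S_{g})$ rather than a single norm identity; for quadratics, however, your argument is exactly right.
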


\begin{proof}
To avoid confusion we use the variable $t$ in the second field $F_{3}[t]/(t^{2}+2t+2)$.
First we notice that the map $\phi$ sending an element $a(x)$, a
polynomial in $F_{3}[x]$ of degree less than 2, to $a(t^{-1})$ is
indeed a field isomorphism. Here we suppress notation for equivalence
classes modulo the polynomials. The map is obviously a bijection preserving
the prime field. Moreover it is a ring isomorphisms considering that
we are merely doing a substitution $x=t^{-1}.$ For it to be well-defined,
we need one calculation. First we observe that $t^{-1}=t+2.$ Then
we calculate
\[
t^{-2}+t^{-1}+2=(t+2)^{2}+(t+2)+2=t^{2}+4t+4+t+2+2=t^{2}+2t+2
\]
which is precisely what is needed for the map to descend to an isomorphism
on the quotient fields.

Now we calculate 
\[
S=\left\{ x,x^{3}\right\} =\left\{ x,2x+2\right\} 
\]
and 
\[
\left\{ t,t^{3}\right\} =\left\{ t,2t+1\right\} .
\]
The generators $S$ of the first field is mapped to 
\[
\left\{ t^{-1},2t^{-1}+2\right\} =\left\{ t+2,-t\right\} =\left\{ -(2t+1),-t\right\} .
\]
This shows already that the additive edges are mapped to additive
edges (orientation reversed). That is, an edge $a(x)=b(x)+s$ is mapped
to $a(t^{-1})=b(t^{-1})+\phi(s)$, then this is $a(t^{-1})+(-\phi(s))=b(t^{-1})$
and $-\phi(s)\in\left\{ t,t^{3}\right\} $. 

Let us finally study the multiplicative edges which also will be orientation
reversed, meaning that an edge $y=zs$ is instead $ys^{-1}=z$. We
observe that
\[
\left\{ (t+2)^{-1},(-t)^{-1}\right\} =\left\{ t,-(t+2)\right\} =\left\{ t,2t+1\right\} =\left\{ t,t^{3}\right\} ,
\]
which is exactly what is required.
\end{proof}

It is interesting that the above graph isomorphism comes from the field structure. 
We do not yet know of a situation where this is not the case, that is, when two
graphs are isomorphic but no graph isomorphism is also at the same time a field 
isomorphism.

The proposition generalizes and a proof analysis would give a general
theorem. But since at present time we do not have a precise conjecture
for when the graphs are isomorphic or not, we leave this exercise
for now, except for drawing the attention to the following notion. 
Given a polynomial $f(x)=a_nx^n+...+a_0$. Recall that the 
\emph{monic reciprocal polynomial} is by definition 
$g(x)=a_0^{-1}x^nf(x^{-1})$.

\noindent {\bf Examples.}  The pair of polynomials in the proposition are monic reciprocal of each other. Same goes for $x^4+x^2+2$ and $x^4+2x^2+2$ in characteristic $3$ as well as $x^2+2$ and $x^2+3$ in characteristic $5$. These pairs moreover have isomorphic graphs. On the other hand, the reciprocal polynomials $x^3+2x+2$ and $x^3+x^2+2$ in characteristic $3$ do not have isomorphic graphs.
\vspace{5pt}

One can consider certain subgraphs, that could be called \emph{core graphs}, which 
are the subgraphs on all vertices but only edges defined by one fixed element of $S$, for example $x$.
Since the elements in $S$ are conjugates one can see that the core graphs of a given model are isomorphic. One can also verify that for reciprocal polynomials, their core graphs are isomorphic. But as the latter among the 
listed example above shows, this may not extend to a graph isomorphism of the full graph. Another example now in characteristic 2, the polynomials $x^5+x^4+x^2+x+1$ and $x^5+x^4+x^3+x+1$ are both primitive, normal and reciprocal to each other, still their graphs are not isomorphic. 

A further observation from the table is that so far in characteristic
2, there are no non-trivial isomorphisms among the cases listed in the table. 
The role of characteristic $2$
implying that $-1=1$ already proved special when looking at the automorphism
group in the previous section. Moreover, one only sees pairs of isomorphic
graphs in the tables, so far no three isomorphic models. Although we think it is 
too risky to conjecture
that all isomorphic graphs would be of the type explained in
the previous proposition, at least one cannot help to ponder this
possibility. 

If one prefers to instead investigate the directed graphs or the two
partial graphs (addition and multiplication) this picture roughly
remains the same: some models are distinguished, but still there are some
unexplained graph isomorphisms. 

\section{Connectivity properties}

One of the most basic property of a graph is whether it is connected
or not. The graphs here are connected (this uses that $K_f$ is a field and not merely
a ring):
\begin{thm} \label{prop:graphconnected}
The graphs $X_{f}$ and $\overrightarrow{X}_{f}$ are connected, respectively
strongly connected. They are moreover Eulerian in respective senses. The diameter of $X_{f}$
is less than $2p(2k+1)-2k-4$, while that of $\overrightarrow{X}_{f}$ is less than 
$(p-1)(k^2+4k+1)+k$.
\end{thm}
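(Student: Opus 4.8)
The plan is to base everything on an explicit reachability analysis in the directed graph $\overrightarrow{X}_f$, from which the undirected statements follow. Write $q=p^k$ and let $d$ be the multiplicative order of $x$. I would record three preliminaries. First, $d\ge k$: since $x^d=1$, the minimal polynomial $f$ of $x$ divides $T^d-1$, so $k=\deg f\le d$; hence $1,x,\dots,x^{k-1}$ are distinct elements of $\langle x\rangle$, and as they form an $F_p$-basis the whole field is the $F_p$-span of $\langle x\rangle$. Second, each $x^{p^i}$ is a power of $x$, so the multiplicative set $S$ generates exactly $\langle x\rangle$; thus following multiplicative edges out of a nonzero vertex $r$ sweeps out the entire coset $r\langle x\rangle$, and in particular multiplication by $x^{-1}=x^{d-1}$ is available. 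Third, following additive edges adds elements of $V:=\mathrm{span}_{F_p}(S)$, the Frobenius-stable subspace spanned by the conjugates of $x$.

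Next I would prove strong connectivity by showing that the set $R$ of vertices reachable from $0$ equals $K_f$. Additive edges give $V\subseteq R$, and the chain $0\to x\to x^2\to\cdots$ gives $\langle x\rangle\subseteq R$. The field-theoretic heart is a \emph{conjugated addition}: for $r\neq 0$ one multiplies by $x^{-j}$, adds any $v\in V$, and multiplies back by $x^{j}$, realizing $r\mapsto r+x^{j}v$ (the only degenerate case, $rx^{-j}+v=0$, lands at $0\in R$). Hence $r+W\subseteq R$ for the additive subgroup $W$ generated by $\bigcup_{j\ge 0}x^{j}V$; but $W$ already contains $x^{j}\cdot x=x^{j+1}$ for all $j$, hence a basis, so $W=K_f$ and $R=K_f$. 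Running the same argument in the reversed digraph --- where additive steps add $-S$ (still spanning $V$) and multiplicative steps multiply by $S^{-1}$ (still generating $\langle x\rangle$) --- shows every vertex reaches $0$, giving strong connectivity and a fortiori connectivity of $X_f$. The Eulerian property is then a degree count: there are no loops (neither $0$ nor $1$ lies in $S$), every nonzero vertex has in-degree $=$ out-degree $=2k$ while $0$ has in-degree $=$ out-degree $=k$, so $\overrightarrow{X}_f$ is balanced and strongly connected; in $X_f$ every degree is even ($4k$ or $2k$) and the graph is connected.

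For the diameter I would exhibit, for any ordered pair $u,v$, an explicit walk from $u$ to $v$ and count its length. Writing the increment $v-u=\sum_{m=0}^{k-1}c_m x^m$ in the basis $1,x,\dots,x^{k-1}$, the walk is a telescoped Horner scheme: first descend by multiplying $u$ by $x^{-(k-1)}$, then alternately add a suitable multiple $c_m x$ of $x$ and multiply by $x$, so that after $k-1$ up-shifts the contribution of $u$ returns to $u$ while the added terms are carried up to $c_m x^m$, producing $u+\sum_m c_m x^m=v$. The point of telescoping is that the expensive inverse-multiplication is performed essentially once, not once per coefficient; the constant term $c_0$ and the degenerate passages through $0$ need a small separate treatment.

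The cost splits into $k-1$ multiplications by $x$ (one step each), at most $p-1$ additions of $x$ per coefficient, and one multiplication by $x^{-(k-1)}$. In $X_f$ a backward traversal of a multiplicative edge multiplies by $x^{-p^j}$ in one step, so the descent is $O(k)$ and the whole walk is linear in $k$, within $2p(2k+1)-2k-4$. In $\overrightarrow{X}_f$ one cannot go backward, and this is the crux: to multiply by $x^{-1}$ using only forward edges I would use the norm relation $x^{-1}=N(x)^{-1}\prod_{j=1}^{k-1}x^{p^{j}}$, where $N(x)=\prod_{j=0}^{k-1}x^{p^{j}}\in F_p^{\times}$ is obtained by multiplying through all of $S$; since $N(x)$ has order dividing $p-1$, this realizes $x^{-1}$ in at most $(p-1)k-1$ forward steps, so the descent by $x^{-(k-1)}$ costs $O((p-1)k^{2})$ and dominates, yielding $(p-1)(k^2+4k+1)+k$. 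The main obstacle throughout is precisely this bookkeeping of forward inverse-multiplication together with matching the exact constants (initialization, the constant term, and worst-case digit sums); by comparison the connectivity and Eulerian parts are soft.
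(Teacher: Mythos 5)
Your proposal is correct, but it takes a genuinely different route from the paper's proof, so a comparison is worthwhile. The paper does everything by routing through the vertex $0$: it builds one explicit directed path from $0$ to an arbitrary element $u_0+u_1x+\cdots+u_{k-1}x^{k-1}$ (alternating additive runs with single multiplications by $x$ to produce $u_0x+\cdots+u_{k-1}x^{k}$, then multiplying by $x^{n-1}$ where $n$ is the multiplicative order of $x$), a second path from an arbitrary element into $0$, and bounds both diameters by concatenating two such paths; the cost of the inverse multiplication is controlled by expanding $n-1$ in base $p$ and using that each $x^{p^{j}}\in S$ is a single step, so the digit sum is $<(k+1)(p-1)$ --- that is exactly where the stated constants $2p(2k+1)-2k-4$ and $(p-1)(k^{2}+4k+1)+k$ come from. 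You instead (i) prove strong connectivity by a soft reachability argument (conjugated additions $r\mapsto r+x^{j}v$ generate an additive subgroup containing $x,x^{2},\dots$, hence all of $K_f$), and (ii) bound the diameter by a direct Horner-type walk from $u$ to $v$, realizing inverse multiplication either by traversing multiplicative edges backwards (undirected case) or by the norm relation $x^{-1}=N(x)^{-1}\prod_{j\ge 1}x^{p^{j}}$ with $N(x)\in F_p^{\times}$ (directed case, at most $k(p-1)-1$ forward steps) --- a different trick from the paper's order-of-$x$ expansion, and an equally valid one. Your counts come out to roughly $k(p+1)$ for $X_f$ and $(k^{2}+k)(p-1)$ for $\overrightarrow{X}_f$, strictly below the stated bounds, so the theorem follows a fortiori; in fact your route gives sharper estimates than the ones the paper states (which are simply what its through-$0$ construction yields). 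The Eulerian part is identical in both proofs (parity of degrees, respectively in-degree equals out-degree, including at $0$). The details you defer are genuinely minor: the constant term $c_0$ disappears entirely if you expand $v-u$ in the basis $x,x^{2},\dots,x^{k}$ instead of $1,x,\dots,x^{k-1}$, and the degenerate passages through $0$ are handled by skipping the (trivial) multiplicative step, since the target of such a step would again be $0$; the paper's own proof silently elides the same degeneracies, e.g.\ when some coefficient $u_j$ vanishes.
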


\begin{proof}
Given an arbitrary element in the field $u_{0}+u_{1}x+...+u_{k-1}x^{k-1}$,
we will connect it to $0$ with a directed path, and from $0$ to
this element. We start with the latter. First, $0$ is connected to
$x$ since $x\in S$. Then $x$ is connected to $2x$, and we continue
in this additive direction until reaching the vertex $u_{k}x.$ From
there we take a step in the multiplicative direction, from $u_{k-1}x$
to $u_{k-1}x^{2}$. Now again working additively with $x$ we connect
this to $u_{k-2}x+u_{k-1}x^{2}.$ We continue this procedure until arriving
at $u_{0}x+u_{1}x^{2}+...+u_{k-1}x^{k}$. Let $n$ be the order of
$x$ in the multiplicative group, so $x^{n}=1$. Now we take $n-1$
multiplicative steps with $s=x$ and arrive at $u_{0}x^{n}+u_{1}x^{n+1}+...+u_{k-1}x^{n-1+k}$
which finally equals the desired end vertex $u_{0}+u_{1}x+...+u_{k-1}x^{k-1}$. This is
a valid path also in the directed graph.

To prove the connectedness for the directed graph we need also to go 
from $u_{0}+u_{1}x+...+u_{k-1}x^{k-1}$ to $0$. The former vertex is connected
to $u_{0}x+u_{1}x^{2}+...+u_{k-1}x^{k}.$ Now keep adding $x$ until
we are at $u_{1}x^{2}+...+u_{k-1}x^{k}.$ Multiply by $x^{n-1}$ until
reaching $u_{1}x+...+u_{k-1}x^{k-1}$. Now repeat this procedure until
arriving at $0$. This proves the asserted connectedness properties.

The fact that they are moreover Eulerian comes from a well-known fact
we need in addition have that the vertex degrees are even which we
have, respectively that at every vertex the outgoing degree equals
the incoming degree, this we also have (notice that also $0$ satisfies
this). 

For the diameter estimates we first consider the undirected graph and the path from $0$
to $u_{0}+u_{1}x+...+u_{k-1}x^{k-1}$. The path joining $0$ to $u_{k}x$ is at most $p-1$ steps long.
Then one multiplicative step is taken and the the procedure is repeated $k$ times. This 
gives a path of length at most $k\times (p-1)+(k-1)$. Now we consider the multiplication by $x^{n-1}$. 
Here $n$ is the order of $x$ and thus divides $p^k-1$. Taking advantages of all elements in $S$ we 
expand $n$ in base $p$. The sum of digits is the length of this path. This sum is at most
$$
(\log_p(n-1)+1)(p-1)
$$
which in turn is strictly less than $(k+1)(p-1)$. All taken together the diameter must therefore be less than
$$
2((k+1)(p-1)+k(p-1)+(k-1))=2p(2k+1)-2k-4.
$$   

Finally, for the directed graph we have $(k+1)(p-1)+k(p-1)+(k-1)$ for the path going out from $0$ and then
for the second path going in to $0$ we count $1+k(p-1)+k(\log_p(n-1)+1)(p-1)$. Thus all taken together we obtain:
$$
(p-1)(k^2+4k+1)+k,
$$
which is an upper bound of the diameter in the directed case.
\end{proof}

Recall the standard notions of $x$ being \emph{primitive} if it generates
the group of units $K_{f}^{\times}$ and it is $normal$ if its conjugates
(i.e. the set $S$) form a basis for $K_{f}$. The
primitive normal basis theorem (due to Carlitz, Davenport, and Lenstra-Schoof)
asserts that there exists $f$ for which $x$ is both primitive and normal.
One interest in normal bases is that they are used in practice for
efficient numerical exponentiation in finite fields. For more about
these field theoretical aspects we refer to \cite{H13}. We connect
to our graphs:
\begin{prop}
Let $K_{f}$ be a finite field and $X_{f}$ its graph. The additive
subgraph is connected if and only if $x$ is normal. The multiplicative subgraph
is connected if and only if $x$ is primitive.
\end{prop}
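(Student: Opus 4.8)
The plan is to recognize each subgraph as an (undirected) Cayley graph and to invoke the standard fact that a Cayley graph $\mathrm{Cay}(G,T)$ is connected if and only if $T$ generates $G$. The additive subgraph has vertex set the abelian group $(K_f,+)$, with $y$ joined to $z$ precisely when $z-y\in S$; forgetting orientation, the relevant connection set is $S\cup(-S)$. Likewise the multiplicative subgraph has vertex set $K_f^{\times}$, with $y$ joined to $z$ precisely when $zy^{-1}\in S$, so the connection set is $S\cup S^{-1}$. In both cases the connected component of the identity is exactly the subgroup generated by the connection set, and the remaining components are its cosets; hence each subgraph is connected if and only if $S$ generates the ambient group. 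This reduces the proposition to two separate generation statements, and since subgroups are closed under inversion and negation it suffices to ask whether $S$ itself generates.

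For the multiplicative subgraph I would observe that every element of $S=\{x,x^{p},\dots,x^{p^{k-1}}\}$ is a power of $x$, and $x\in S$, so the subgroup of $K_f^{\times}$ generated by $S$ is precisely the cyclic group $\langle x\rangle$. Hence the subgraph is connected if and only if $\langle x\rangle=K_f^{\times}$, which is exactly the definition of $x$ being primitive.

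For the additive subgraph the crux is a characteristic-$p$ observation: since $p\cdot a=0$ for every $a\in K_f$, any $\mathbb{Z}$-linear combination of elements of $S$ reduces to an $F_p$-linear combination, so the additive subgroup generated by $S$ coincides with its $F_p$-linear span $\mathrm{span}_{F_p}(S)$. Thus the subgraph is connected if and only if $\mathrm{span}_{F_p}(S)=K_f$. Now $\dim_{F_p}K_f=k$, and the $k$ conjugates in $S$ are distinct: if $x^{p^{i}}=x^{p^{j}}$ with $0\le i<j<k$ then $x^{p^{\,j-i}}=x$, forcing $x$ into a proper subfield and contradicting that $x$ has degree $k$. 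A spanning set whose size equals the dimension is a basis, so $\mathrm{span}_{F_p}(S)=K_f$ if and only if $S$ is a basis, which is precisely the statement that $x$ is normal.

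Neither half presents a genuine obstacle once the Cayley-graph viewpoint is in place; the only step requiring a moment's care is the reduction, in the additive case, of the generated subgroup to the $F_p$-span. This is exactly what makes \emph{normality}---a linear-algebra condition on $S$ as a basis---the correct notion here, rather than mere abstract $\mathbb{Z}$-generation of the additive group.
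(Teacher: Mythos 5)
Your proof is correct and takes essentially the same route as the paper's: both reduce connectivity of each subgraph to generation of the ambient group by $S$, identifying generation with $\langle x\rangle = K_f^{\times}$ (primitivity) in the multiplicative case and with $S$ spanning, hence being a basis (normality), in the additive case. You are merely more explicit on two points the paper leaves implicit, namely the characteristic-$p$ reduction of $\mathbb{Z}$-generation to the $F_p$-span and the distinctness of the $k$ conjugates, which makes your write-up a more rigorous version of the same argument.
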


\begin{proof}
This is basically clear from the definitions. The element $x$ is
primitive precisely when all elements in $K_{f}^{\times}$ is a power
of $x$ which is the same as that the multiplicative subgraph is connected.
The set $S$ has the cardinality of a basis, and if every element
can be written as a linear combination of these elements, then $x$
is normal, but this is also the same that $0$ can be joined by a
path of additive edges to every element of $K_{f}$, thus the graph
is connected precisely when $x$ is normal.
\end{proof}

\noindent {\bf Example.} In characteristic 2, the polynomials $x^3+x^2+1$ and $x^3+x+1$
form a pair of reciprocal polynomials. In the latter, the additive graph is connected,  
in other words $x$ is normal, but sketching the graph of the former one observes 
that the additive graph is not connected, thus $x$ is not normal. 
(This is in contrast with primitivity which is preserved taking the reciprocal 
polynomial.) Clearly the 
graphs are therefore not isomorphic, and incidentally it explains why the 
graph automorphisms of the first polynomial is so large: the connected component 
of the additive graph not containing $0$ is a complete graph on $4$ vertices. This 
has the symmetric group on four letters as isomorphism group, which has order $24$.
Also the multiplicative subgraph is a complete graph, which implies that these 
automorphisms can be extended to the full graph (acting trivially on the other connected 
additive component), giving $6\times 24=144$ as the order of the automorphism group.   
\vspace{5pt}

\noindent {\bf Example.} To understand the definitions one can even consider
 the trivial example $k=1$, say 
$K=F_p[x] /(x-1)$. This means that $S=\{ 1 \}$ and the additive graph is a circle, having 
a fair amount of automorphisms. On the other hand the multiplicative graph has a loop at 
each vertex except $0$ (basically a matter of convention in the definition). This means that for the total graph, rotations are not automorphisms 
since $0$ needs to be fixed. So the only remaining graph automorphism is the one given
in Proposition \ref{propinvolution}, hence the graph automorphism group is the cyclic group of order $2$ if $p>2$, while in case $p=2$ both the graph and field automorphism groups are trivial. One could instead consider $x-a$, giving other graphs 
with the additive and multiplicative subgraphs connected or not. 
\vspace{5pt}

It is natural to wonder about girth, that is the length of the shortest closed path. For example this is studied in \cite{Ka90}  for the graphs considered there. It translates into expressing every element in terms of the elements in $S$ in a minimal (non-trivial) fashion. It is obvious that in our case the girth is at most the characteristic $p$ since $a=a+px$. But could it be smaller? Yes, in fact if $k\geq 2$ then there is always a square $x,x+x^p,2x+x^p,2x,x$. And in the trivial case $k=1$ there are self-loops so the girth is $1$. So the girth is at most 4 in any case. But it can be even smaller, for example in the $p=3$ examples above it is visibly 2, using one multiplicative and one additive edge. In fact this is the general picture:

\begin{prop}
The graphs $X_f$ has girth 2 whenever $k\geq 2$.
\end{prop}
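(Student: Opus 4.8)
The plan is to establish two facts: that $X_f$ contains no loops (so its girth is at least $2$) and that it contains a pair of distinct vertices joined by two distinct edges (so its girth is at most $2$); together these give girth exactly $2$.

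First I would rule out loops. An additive loop at a vertex $y$ would require $0 = y - y = s$ for some $s \in S$, which is impossible since every element of $S$ is a power of $x$ and hence nonzero. A multiplicative loop at a vertex $y \neq 0$ would require $1 = y y^{-1} = s$ for some $s \in S$, that is $1 \in S$. But $1$ lies in the prime field $F_p$ and is therefore fixed by the Frobenius map, so its only Frobenius-conjugate is itself; since $S$ is the orbit of $x$ under Frobenius, $1 \in S$ would force $x = 1$, contradicting that $x$ generates a field extension of degree $k \geq 2$. Hence $1 \notin S$, there are no loops, and the girth is at least $2$.

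Next I would construct an explicit $2$-cycle, namely two vertices joined by both an additive and a multiplicative edge. Seeking $y,z$ with $z - y = s$ and $z y^{-1} = s'$ for some $s, s' \in S$, I would eliminate $z = s'y$ to obtain the single relation $(s' - 1)\,y = s$. Since $1 \notin S$, every $s' \in S$ satisfies $s' \neq 1$, so $s'-1$ is invertible; fixing such an $s'$ and any $s \in S$, I set $y = s\,(s'-1)^{-1}$ and $z = s'y$. As $s$ and $s'$ are nonzero, both $y$ and $z$ are nonzero, and $y = z$ would give $s = z - y = 0$, which is impossible, so $y \neq z$. One checks directly that $z - y = (s'-1)y = s$ and $z y^{-1} = s'$, so $y$ and $z$ are indeed joined by an additive edge labelled $s$ and a multiplicative edge labelled $s'$.

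Finally I would observe that these two edges are of different types and hence are distinct elements of the edge set, even if $s = s'$ as field elements; they form a closed walk of length $2$ between distinct vertices, i.e. a cycle of length $2$ in the multigraph $X_f$. Combined with the absence of loops this shows the girth equals $2$. The one delicate point, which underlies both the no-loops argument and the invertibility of $s'-1$, is the verification that $1 \notin S$ when $k \geq 2$; everything else reduces to a direct substitution.
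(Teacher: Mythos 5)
Your proposal is correct and follows essentially the same route as the paper: the paper rules out loops by noting $x^{p^m}\neq 0$ and $x^{p^m}\neq 1$ for $k>1$, and produces the $2$-cycle by solving $a+x=a\cdot x$, i.e.\ $a=x(x-1)^{-1}$, which is exactly your equation $(s'-1)y=s$ specialized to $s=s'=x$. Your version is marginally more general (arbitrary $s,s'\in S$) and spells out the checks that $y\neq z$ and $1\notin S$ in more detail, but the underlying argument is identical.
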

\begin{proof}
Consider the equation $a+x=a\cdot x$. If it has a solution, then this provides a closed path of length 2. Since $x\neq 1$ in view of $k>1$, we can solve for $a$, namely
$$
a=x(x-1)^{-1}.
$$ 
There are no closed paths of length $1$, that is, self-loops at a vertex (thanks to that we chose not to include the multiplicative edges from $0$ in the definition of $X_f$). To see this, additively we would have $a=a+x^n$, which cannot happen since $x\neq 0$ and also $a=ax^{p^m}$ has no solution for $a\neq 0$ since $x^{p^m}\neq 1$ when $k>1$.
\end{proof}

\noindent {\bf Example.} In $F_{3}[x]/(x^{2}+x+2)$, see Figure 5, we have $a=2x+2$ giving rise to the closed path from $2x+2$ to $2$ and back. 

\section{Spectral estimates}

The Laplacian of a finite (undirected) graph is the operator on functions $g$ on the vertices, defined by:
$$
\Delta g (y)= \sum g(y)-g(z),
$$ 
where the sum is over edges having one endpoint at $y$ and $z$ denotes the other endpoint. (Note that any loops at $y$ play no role for the definition.) do It is well-known that this operator (or matrix) is symmetric and positive semi-definite. The smallest eigenvalue is $\lambda_0 =0$ with the constant functions as corresponding eigenvectors, and the next smallest $\lambda_1$ is strictly positive if and only if the graph is connected. Indeed, this eigenvalue is an important measure of connectivity. The larger the gap to $0$ the better it is connected, called expansion property. It is highly desirable to have a sequence of $d$-regular graphs with a spectral gap that stays bounded, such sequences are called expanders. We refer to \cite{Sp19} for background and further references on these topics.

For a general graph it is typically difficult to determine the spectrum of its Laplacian. It seems to be the same for our graphs, in spite of that Cayley graphs of abelian groups have an explicit spectrum. The difficulty for fields comes from the interaction of the addition and multiplication operations. 

Kurlberg suggested the following family of graphs in this context. Consider the polynomial $f(x)=x^2+1$ and primes $p$ which are congruent to $3$ modulo $4$. One sees that $f(x)$ is then irreducible giving rise to fields $K_f$ with $p^2$ elements. The root to adjoin we denote like in complex analysis by $i$, thus we have $i^2=-1$. The generating set for the graph is easily calculated to be $\{-i,i\}$. The Laplacian of the corresponding graph $X_f$ is
$$
\Delta g(y)=8g(y)-2g(y+i)-2g(y-i) -2g(yi)-2g(-yi).
$$ 
Note that this formula is valid even at $y=0$ where there are no multiplicative edges and the vertex degree is $4$ (since when $y=0$ also $yi=0$). In order to conform with the most standard definition of expander we could add loops at $0$ to make the graphs $8$-regular (this procedure does not change the eigenvalues of the laplacian as already remarked).  Computer calculations seemed to indicate to us that this sequence is not an expander and we are in fact able to establish this with a proof:

\begin{thm}
The family of $8$-regular graphs coming from the fields $F_p[x]/(x^2+1)$ with prime $p\equiv 3 \mod 4$ as $p\rightarrow \infty$ is not an expander.
\end{thm}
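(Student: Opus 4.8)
The plan is to exhibit an explicit nonconstant eigenfunction of the Laplacian whose eigenvalue $\lambda$ stays bounded away from $0$ by a quantity that tends to $0$ as $p\to\infty$; since $\lambda_1$ is the minimum of the Rayleigh quotient over nonconstant functions, producing a test function $g$ with small Rayleigh quotient $\langle \Delta g, g\rangle/\langle g,g\rangle$ suffices to show $\lambda_1\to 0$, which is exactly the failure of the spectral gap that defines non-expansion. The natural source of such a function is the additive structure: the additive part of the Laplacian, $g\mapsto 4g(y)-2g(y+i)-2g(y-i)$, is the Laplacian of a Cayley graph on the additive group $(\mathbb{F}_p\times\mathbb{F}_p,+)$ with connection set $\{i,-i\}$, and Cayley graphs of abelian groups are diagonalized by the additive characters. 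So the first move is to take $g$ to be an additive character $\psi_a(y)=\zeta_p^{\,\mathrm{tr}(ay)}$ (or a real combination such as $\cos$ of a trace form), for which the additive part acts by the explicit eigenvalue $4-2\cos(\theta)-2\cos(-\theta)=4-4\cos\theta$ with $\theta$ determined by the pairing of $a$ with $i$.

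\emph{Next} I would compute the multiplicative contribution on this same test function. The terms $-2g(yi)-2g(-yi)$ do \emph{not} respect the additive characters, because multiplication by $i$ is a linear map $y\mapsto yi$ of the $\mathbb{F}_p$-vector space $\mathbb{F}_{p^2}$ that permutes the characters rather than scaling a single one; thus $\psi_a$ is not an exact eigenfunction of the full $\Delta$. The key computation is therefore to bound $\langle \Delta g, g\rangle$ by separating the additive piece (which is exactly controlled) from the multiplicative piece (which I estimate via a character/Gauss-sum bound). Concretely one evaluates $\sum_y \overline{g(y)}\,g(yi)$, which is a sum of the form $\sum_y \zeta_p^{\,\mathrm{tr}(a(yi-y))}=\sum_y\zeta_p^{\,\mathrm{tr}(a(i-1)y)}$; this is a full character sum that vanishes unless $a(i-1)=0$, i.e. it is either $0$ or $p^2$. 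Choosing $a$ so that the additive eigenvalue $4-4\cos\theta$ is small (take $a$ with $\mathrm{tr}(ai)$ a small multiple of $1$, so $\theta\approx 2\pi/p$ and $4-4\cos\theta=O(1/p^2)$) while keeping the multiplicative cross terms under control is the heart of the argument, and one should check whether a single character already gives an eigenfunction of the multiplicative part or whether a short orbit-sum of characters under $y\mapsto yi$ must be formed.

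\emph{The cleaner route}, which I would pursue in parallel, is to build $g$ to be simultaneously adapted to both operations by summing a character over its orbit under multiplication by $i$. Since multiplication by $i$ has order $4$ on $\mathbb{F}_{p^2}^\times$, set $g=\sum_{j=0}^{3}\psi_{a}(\,\cdot\, i^{-j})$ for a suitable $a$; then $g$ is exactly invariant under $y\mapsto yi$, so the multiplicative part of $\Delta$ acts on $g$ by the scalar $4-2-2=0$ up to the behavior at the fixed structure, while the additive part contributes an average of four small additive eigenvalues $4-4\cos\theta_j$. The Rayleigh quotient is then a genuine average of quantities of size $O(1/p^2)$, giving $\lambda_1=O(1/p^2)\to 0$.

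\textbf{The main obstacle} I anticipate is the interaction at and near the exceptional vertex $y=0$ together with the orbit averaging: making $g$ exactly $i$-multiplication-invariant forces a symmetrization that could also symmetrize (and hence enlarge) the additive eigenvalue, so I must verify that at least one short orbit of characters yields a genuinely small additive eigenvalue $4-4\cos\theta$ with $\theta=O(1/p)$ while the four characters in the orbit remain distinct (so that $g$ is nonconstant and the denominator $\langle g,g\rangle$ is of order $p^2$, not degenerate). Controlling this trade-off — ensuring the numerator is $O(1/p^2)\cdot p^2 = O(1)$ while the denominator is $\asymp p^2$, so the quotient is $O(1/p^2)$ — is the delicate point; the contribution of the single vertex $0$, where the degree drops, is an $O(1)$ additive error to both numerator and denominator and is harmless in the limit, but it must be accounted for explicitly rather than ignored.
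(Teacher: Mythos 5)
Your \emph{cleaner route} is essentially the paper's own proof: the paper's test function $g(v+iw)=(e(v)+e(-v))(e(w)+e(-w))$ is exactly the orbit sum of a single additive character under multiplication by $i$, and your symmetrization worry resolves favorably because all four characters in the orbit share the same additive eigenvalue (cosine being even), so $g$ is an exact eigenfunction of the full Laplacian with eigenvalue $8\sin^2(\pi l/p)=O(1/p^2)\rightarrow 0$. Your hedge about the single-character route is also correctly resolved, in the negative: precisely because the multiplicative cross sums $\sum_y \overline{\psi_a(y)}\,\psi_a(yi)$ vanish, a lone character has Rayleigh quotient about $8-4\cos\theta\geq 4$, so the orbit sum is genuinely necessary.
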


\begin{proof}
We use the notation introduced above, and denote a general element $v+iw$ with $v,w\in F_p$. Note that $(v+iw)i=-w+iv$. Let $l$ be an integer between $1$ and $p-1$.  Define $e(u)=\exp(2\pi i lu/p)$ which is a well defined function for $u\in F_p$. In the hope of finding some explicit eigenfunctions of the Laplacians we let
$$
g(v+iw)=e(v)e(w)+e(-v)e(w)+e(-v)e(-w)+e(v)e(-w).
$$ 
We first calculate $g((v+iw)i)$ which gives
$$
g(-w+iv)=e(-w)e(v)+e(w)e(v)+e(w)e(-v)+e(-w)e(-v)=g(v+iw).
$$
Similarly we get that $g((v+iw)(-i))=g(v+iw)$.

Next we develop, using $e(w\pm 1)=e(\pm 1)e(w)$,
$$
g(v+i(w+1))+g(v+i(w-1))=
$$
$$
e(1)e(v)e(w)+e(1)e(-v)e(w)+e(-1)e(-v)e(-w)+e(-1)e(v)e(-w)+
$$
$$
e(-1)e(v)e(w)+e(-1)e(-v)e(w)+e(1)e(-v)e(-w)+e(1)e(v)e(-w).
$$
Notice that this equals $(e(1)+e(-1))(e(v)e(w)+e(-v)e(w)+e(-v)e(-w)+e(v)e(-w))$.

In summary we therefore get
$$
\Delta g(v+iw) = (4-2e(1)-2e(-1))g(v+iw)=8\sin^2(\pi l/p) g(v+iw)
$$
which shows as desired that $g$ is an eigenfunction. The corresponding eigenvalue is $8\sin^2(\pi l/p)$. 

For a fixed $l$, for example $l=1$, as $p$ goes to infinity, this eigenvalue is approximately equal to $8\pi^2/p^2$, which tends to $0$. This of course means that $\lambda_1 \rightarrow 0$ and so therefore this sequence of graphs is not an expander.
\end{proof}

The computer calculations alluded to above indicate that the eigenvalue here determined seems to be of the same order of magnitude as $\lambda_1$. In the general case one can obtain certain inequalities:

\begin{prop}
Given a graph $X_f$ of a finite field of $p^k$ elements. The first non-trivial eigenvalue $\lambda_1$ satisfies
$$
\lambda_1\geq \frac {1}{p^{k+1}(2k+1)}.
$$
In case $x$ is normal, 
$$
\lambda_1\geq 4\sin^2(\pi/p).
$$
\end{prop}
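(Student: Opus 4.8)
The plan is to establish the two inequalities by quite different means: the first is a soft bound valid for any connected graph once one controls the diameter, while the second exploits the special product structure of the additive subgraph when $x$ is normal.

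For the general bound I would start from the variational characterization $\lambda_1 = \min\{\langle \Delta g,g\rangle/\langle g,g\rangle : g \perp \mathbf 1,\ g\ne 0\}$, where $\langle \Delta g,g\rangle = \sum_{e=\{u,w\}}(g(u)-g(w))^2$ is summed over the undirected edges. Fix $g \perp \mathbf 1$ and write $n = p^k$ for the number of vertices. Expanding and using $\sum_y g(y)=0$ gives the identity $\sum_{y,z}(g(y)-g(z))^2 = 2n\sum_y g(y)^2$, the double sum running over ordered pairs. For each ordered pair $(y,z)$ choose a shortest path $P_{yz}$, of length at most the diameter $D$; by Cauchy--Schwarz $(g(y)-g(z))^2 \le D\sum_{e\in P_{yz}}(g(u)-g(w))^2$. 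Summing over all pairs and reorganizing the double sum by edges, each edge lies on at most $n^2$ of the chosen paths, so $2n\sum_y g(y)^2 \le D\,n^2\,\langle\Delta g,g\rangle$, i.e. $\lambda_1 \ge 2/(Dn)$. Inserting the estimate $D < 2p(2k+1)-2k-4 < 2p(2k+1)$ from Theorem \ref{prop:graphconnected} together with $n=p^k$ yields $\lambda_1 > 1/(p^{k+1}(2k+1))$, as claimed.

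For the normal case I would compare $X_f$ with its additive subgraph. As quadratic forms $\Delta = \Delta_{\mathrm{add}} + \Delta_{\mathrm{mult}}$, and since $\Delta_{\mathrm{mult}}$ is positive semidefinite we have $\langle \Delta g,g\rangle \ge \langle \Delta_{\mathrm{add}} g,g\rangle$ for every $g$; restricting to $g\perp\mathbf 1$ this gives $\lambda_1(X_f)\ge \lambda_1(\Delta_{\mathrm{add}})$, the additive graph being connected precisely because $x$ is normal. When $x$ is normal the set $S=\{x,x^p,\dots,x^{p^{k-1}}\}$ is an $F_p$-basis, so identifying $K_f$ with $F_p^k$ the additive subgraph is exactly the Cayley graph of $F_p^k$ with generators $\{\pm e_1,\dots,\pm e_k\}$, namely the Cartesian product of $k$ copies of the $p$-cycle (with a doubled edge when $p=2$, which is the convention forced by the definition). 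The Laplacian eigenvalues of a Cartesian product are the sums of the factor eigenvalues, and those of a single $p$-cycle are $4\sin^2(\pi m/p)$ for $m=0,\dots,p-1$; the smallest nonzero value of the product, attained by switching on a single factor, is $4\sin^2(\pi/p)$. Hence $\lambda_1(\Delta_{\mathrm{add}})=4\sin^2(\pi/p)$ and the second inequality follows.

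The routine steps are the Cauchy--Schwarz path estimate and the eigenvalue computation for the product of cycles. The one point that needs genuine care is the edge-counting in the first bound: one must verify that the crude estimate ``at most $n^2$ paths through any edge'' is exactly what makes the constant come out to $p^{k+1}(2k+1)$ once combined with the diameter bound, and separately that the characteristic-$2$ doubling of additive edges is consistent with the value $4\sin^2(\pi/2)=4$ in the second bound. Neither is difficult, but both are where an arithmetic slip would hide.
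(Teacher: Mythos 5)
Your proof is correct and follows essentially the same route as the paper: the first inequality is obtained by combining the diameter bound of Theorem \ref{prop:graphconnected} with a diameter--eigenvalue inequality, and the second by comparing $X_f$ with its additive subgraph, which for normal $x$ is the discrete torus whose smallest nonzero Laplacian eigenvalue is $4\sin^2(\pi/p)$. The only difference is that the paper cites \cite{Sp19} for the two spectral ingredients (Lemma 10.6.1, in the marginally sharper form $\lambda_1\geq 2/(D(p^k-1))$, and Corollary 5.2.2 for monotonicity under adding edges), whereas you prove both from scratch; your slightly weaker constant $2/(Dp^k)$ still yields the stated bound, and your explicit treatment of the doubled additive edges when $p=2$ addresses a point the paper passes over silently.
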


\begin{proof}
Let $D$ denote the diameter of the graph $X_f$, which has $p^k$ number of vertices. Inequalities between the diameter and the first non-trivial eigenvalues appear in \cite{Ch89}. For example, from Lemma 10.6.1 in \cite{Sp19}  one knows the inequality
$$
\lambda_1\geq \frac {2}{D(p^k-1)}
$$
This gives together with our estimate for the diameter in Theorem \ref{prop:graphconnected} that
$$
\lambda_1\geq \frac {1}{p(2k+1)(p^k-1)}
$$
which shows the first inequality.

For the second statement, when $x$ is normal the additive graph is a discrete torus of side lengths $p$. These graphs have well-known explicit spectrum. In particular the smallest eigenvalue is $4\sin^2(\pi/p)$. As is well-known, adding edges an only increase the eigenvalues, see for example Corollary 5.2.2 in \cite{Sp19}. Therefore the claimed assertion follows, since for the torus we have identified the smallest non-zero eigenvalue. When adding the multiplicative edges to get the full graph we can never have any eigenvalue smaller than that (also since the trivial eigenvalue stays $0$).
\end{proof}

\section{A regular covering space}

\begin{minipage}[b]{0.6\textwidth} 
It is natural to search for simple invariants, ideally complete,
that detect the isomorphisms classes of the graphs. With this motivation
in mind, let us here describe a covering graph that we find interesting
and that might moreover be useful for example in the study of spectral
properties of the graphs $X_{f}$. 

We define a natural covering space (graph) $C_{f}$ of our graph $X_{f}$.
The vertex set is the set $K_{f} \times K_{f}^\times$.
For each vertices $(y,z), (y',z') \in K_{f} \times K_{f}^\times$ 
and $s\in S$ we have corresponding edges if 
\[ 
y'-y=s, z=z'
\]  or 
\[ y'y^{-1}=z'z^{-1}=s. \]
\end{minipage}
\begin{minipage}[b]{0.4\textwidth}
	\begin{center}
		\fbox{\includegraphics[scale=0.4]{./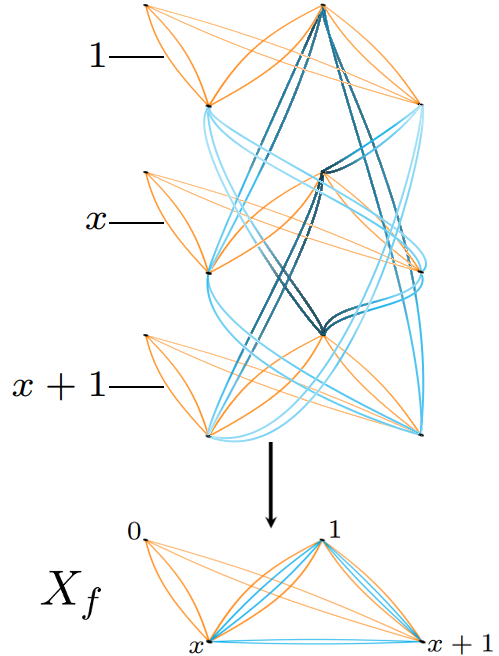} }
	\end{center}
\end{minipage}
Note that for the latter type of edge neither $y$ nor $y'$ can be $0.$ 

\begin{prop}
The graph $C_{f}$ is a regular covering space of $X_{f}$.
\end{prop}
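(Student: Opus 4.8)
The plan is to exhibit the projection onto the first coordinate as the covering map and the multiplicative group $K_f^\times$ as its deck transformation group acting simply transitively on fibers. Define $\pi\colon C_f\to X_f$ by $\pi(y,z)=y$, so that the fiber over a vertex $y\in K_f$ is $\{y\}\times K_f^\times$, of cardinality $p^k-1$. First I would check that $\pi$ is a graph morphism: an additive edge of $C_f$ joins $(y,z)$ to $(y',z)$ with $y'-y=s$, which projects to the additive edge $y'-y=s$ of $X_f$; a multiplicative edge joins $(y,z)$ to $(y',z')$ with $y'y^{-1}=z'z^{-1}=s$, which projects to the multiplicative edge $y'y^{-1}=s$. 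So edges map to edges.

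The core of the argument is verifying the local covering condition, namely that $\pi$ restricts to a bijection between the edges incident to $(y,z)$ and the edges incident to $y$. For $y\neq 0$ the neighbors of $(y,z)$ are exactly $(y\pm s,z)$ and $(s^{\pm 1}y,\,s^{\pm 1}z)$ as $s$ ranges over $S$: the additive steps fix the second coordinate and the multiplicative steps scale both coordinates by the same factor $s^{\pm1}$. These project respectively to $y\pm s$ and $s^{\pm1}y$, which are precisely the neighbors of $y$ in $X_f$, and the correspondence is a bijection on edges because additive and multiplicative lifts are distinguished by whether the second coordinate changes. I would then treat the vertex $0$ separately: at $(0,z)$ the multiplicative edges disappear on both sides (the defining condition forbids $y=0$), leaving only the $2k$ additive neighbors $(\pm s,z)$, matching the $2k$ additive neighbors of $0$ in $X_f$. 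This consistency at the irregular vertex $0$, where the local degree drops from $4k$ to $2k$ simultaneously upstairs and downstairs, is the step I expect to need the most care, since it is where one must confirm that $\pi$ remains a genuine covering rather than merely a morphism.

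To establish regularity, I would produce the deck transformation group explicitly. For each $a\in K_f^\times$ define $\sigma_a(y,z)=(y,az)$. Then $\pi\circ\sigma_a=\pi$, and $\sigma_a$ is a graph automorphism of $C_f$: it preserves additive edges since the second coordinate is merely rescaled by the fixed unit $a$, and it preserves multiplicative edges since $(saz)(az)^{-1}=s$, while $az$ remains a unit. The assignment $a\mapsto\sigma_a$ is an injective homomorphism from $K_f^\times$ into the group of deck transformations. Finally, for any two points $(y,z)$ and $(y,z')$ in the same fiber, the element $a=z'z^{-1}\in K_f^\times$ satisfies $\sigma_a(y,z)=(y,z')$, so the deck group acts simply transitively on each fiber. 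Since a covering whose deck group acts transitively on fibers is regular, this completes the proof and identifies $C_f$ as a $(p^k-1)$-fold regular covering of $X_f$ with deck group $K_f^\times$.
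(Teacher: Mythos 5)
Your proposal is correct and follows essentially the same route as the paper: projection onto the first coordinate as the covering map, and the maps $(y,z)\mapsto(y,az)$ for $a\in K_f^\times$ as deck transformations acting transitively on fibers, which gives regularity. The only difference is one of detail: the paper dismisses the covering property as clear, while you explicitly verify the local edge bijection, including the degenerate behaviour at the vertex $0$ where multiplicative edges vanish both upstairs and downstairs --- a worthwhile check, but the same argument.
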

\begin{proof}
The map $\pi : C_{f} \rightarrow X_{f}$ given by $(y,z) \mapsto y$ is clearly a surjective graph morphism. It is then also clear that it is 
a covering map. 

From above discussions it follows that field automorphisms also defines automorphisms of the graph $C_f$. There are also many covering transformations of the following kind:
 $K_{f}^\times$ acting on $C_f$. 
  Given an element $a\in K_{f}^\times$ we define $F_a :C_f \rightarrow C_f$ via
 $$
 F_a(y,z)=(y,az).
 $$
It is immediate that $\pi\circ F_a =\pi$. We need to verify that it is a graph automorphism, for this it remains to see that edges are mapped to edges. This is easily done:
there is an edge between $(y,z)$ and $(y + s,z)$ (respectively between $(y,z)$ and $(ys,zs)$) if and only if there is an edge between $(y,az)$ and $(y + s,az)$ (respectively between $(y,az)$ and $(ys,azs)$).
 
The group of these transformations clearly acts transitively on the fibers $K_{f}^\times$ of the covering. Thus our covering graph is regular as was to be shown.
\end{proof}

Note that while these graphs are \emph{regular}  in the sense of covering space theory, they are not in the sense of graph theory.  Alternative words in use for \emph{regular} in the covering space context are \emph{normal} or \emph{Galois}, but both of these terms have different meanings in the theory of fields.

\begin{prop}
The graph $C_f$ is connected if and only if $x$ is primitive.
\end{prop}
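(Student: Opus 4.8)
The plan is to exploit that the second coordinate of a vertex of $C_f$ can only be changed by multiplicative edges, and always by a factor lying in the subgroup $\langle S\rangle$ of $K_f^\times$. Since every element of $S$ is a power of $x$ and $x\in S$, this subgroup is exactly $\langle x\rangle$. Concretely, traversing an additive edge leaves the second coordinate fixed (the edge condition is $z=z'$), while traversing a multiplicative edge in either direction multiplies it by some $s^{\pm 1}\in\langle x\rangle$. Hence along any path in $C_f$ from $(y,z)$ to $(y',z')$ the ratio $z'z^{-1}$ lies in $\langle x\rangle$.

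This immediately yields the ``only if'' direction. If $x$ is not primitive, then $\langle x\rangle$ is a proper subgroup of $K_f^\times$, so the second coordinates of any two connected vertices must lie in the same coset of $\langle x\rangle$; as there is more than one coset, $C_f$ is disconnected.

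For the converse I would assume $x$ primitive and connect an arbitrary vertex $(y_0,z_0)$ to the fixed vertex $(x,1)$. First, if $y_0=0$ the additive edge labelled $x$ joins $(0,z_0)$ to $(x,z_0)$, so I may assume $y_0\neq 0$. Since $X_f$ is connected by Theorem \ref{prop:graphconnected}, I choose a walk from $y_0$ to $x$ in $X_f$ and lift it to $C_f$ starting at $(y_0,z_0)$; this is always possible because the multiplicative edges of $X_f$ occur only between nonzero vertices, and each such edge, as well as each additive edge, lifts uniquely. The lift ends at some $(x,z_1)$. It then remains to connect $(x,z_1)$ to $(x,1)$ within the fiber over $x$, that is, to move the second coordinate arbitrarily.

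The key step, and the main obstacle, is to produce a closed walk whose net effect on the second coordinate is multiplication by $x$: this is delicate precisely because $x$ need not be normal, so additive edges alone need not connect a fiber. Here the girth-$2$ relation $a+x=a\cdot x$ with $a=x(x-1)^{-1}$ (valid for $k\ge 2$) does exactly what is needed, since the multiplicative $x$-edge from $a$ to $ax$ and the additive $x$-edge from $a$ to $a+x=ax$ share both endpoints; traversing the first forward and the second backward is a loop at $a$ whose lift sends $(a,z)$ to $(a,zx)$, i.e.\ with holonomy $x$. As the covering is regular with \emph{abelian} deck group $K_f^\times$, the holonomy homomorphism $\pi_1(X_f)\to K_f^\times$ is independent of base point, so a loop of holonomy $x$ is available at the vertex $x$ as well (equivalently, conjugate the loop at $a$ by a walk from $x$ to $a$). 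Applying such a loop repeatedly from $(x,z_1)$ reaches $(x,z_1x^{m})$ for every integer $m$, and primitivity makes $\{x^m\}$ all of $K_f^\times$, so $(x,1)$ is reached and $C_f$ is connected. The degenerate case $k=1$ is handled directly: there a single multiplicative step followed by additive steps returning to the same first coordinate already realizes holonomy $x$, and the rest of the argument is identical.
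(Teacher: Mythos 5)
Your proof is correct, and it reaches the conclusion by a genuinely different route than the paper. For the ``only if'' direction the two arguments coincide: the second coordinate can only move inside $\langle x\rangle$, so a proper subgroup forces disconnection. For the hard direction the paper works entirely by hand: assuming $x$ primitive, it constructs by induction on $n$ explicit walks in $C_f$ from $(0,1)$ to $(x^n,1)$ --- each inductive step adds $x$, rides multiplicative edges around the full multiplicative order of $x$ to reset the second coordinate, then retraces the previously built walk backwards --- and finally spreads these walks to all vertices $(x^m,x^l)$ and $(0,x^l)$ via the deck action. You instead modularize: (i) every walk in $X_f$ lifts to $C_f$ from any prescribed initial vertex, so connectivity of $X_f$ (Theorem \ref{prop:graphconnected}) moves any vertex of $C_f$ into the fiber over $x$; (ii) the girth-$2$ square $a+x=ax$, $a=x(x-1)^{-1}$, is a loop in $X_f$ whose lift has holonomy exactly $x$; (iii) since the edge factors are independent of the chosen lift and the deck group $K_f^\times$ is abelian, conjugating that loop by a walk transports holonomy $x$ to the basepoint $x$, so within a fiber the reachable set is a full coset of $\langle x\rangle$, which is everything by primitivity. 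Your route buys brevity and a conceptual explanation --- a covering is connected precisely when the holonomy group acts transitively on a fiber, and here that group is exactly $\langle x\rangle$ --- at the cost of invoking previously established results (connectivity of $X_f$ and the girth-$2$ relation), whereas the paper's construction is self-contained and yields explicit paths, hence in principle diameter-type information for $C_f$. Two minor points: your conjugation argument is indeed the right way to make ``basepoint independence of holonomy'' precise rather than just asserting it; and your separate treatment of $k=1$ is really only needed for $p=2$, since for $p>2$ a primitive root in a degree-one model satisfies $x\neq 1$, so the same girth-$2$ loop is available there too.
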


\begin{proof}
If $x$ is not primitive, it is not possible to join certain levels $(*,a)$ because the powers of $x$ are not enough. Hence $C_f$ is not connected in this case. 

Assume now that $x$ is primitive. Every non-zero element can thus be written $x^m$. 
We need to show that we can join $(0,1)$ to the vertices $(0,x^l)$ and $(x^m, x^l)$ for any $l$ and $m$.
We describe a path that corresponds to a sequence of 
addition and multiplication by $x$ in the model field $K_f$. This path can be reversed using characteristic $p$ and
the order of $x$ or just by adding $-x$ and multiplying by $x^{-1}$ if we choose to forget orientation.

It is enough to show that $(0,1)$ can be joined to $(x^n,1)$ for
any $n$ since then we can link $(0,1)$ to $(1,1)$. So we can reach any level by using 
multiplication by $x$ appropriate times. Then going from $(x^l,x^l)$ to $(0,x^l)$ is
just reversing the path between $(0,1)$ and $(x^l, 1)$.

For this, consider first the following path. From $(0,1)$ we take additive step by $x$ to $(x,1)$.
After that multiply enough times by $x$ to arrive at $(1,x^{-1})$. 
Now add $x$, and multiply by $x$ leading us to $(x+x^2,1)$ via $(1+x,x^{-1})$. 
Finally adding $x$ enough times using characteristic $p$ we arrive at $(x^2,1)$. 
This path can easily be reversed in a natural way.

The latter path is the first in the induction, assume we have $(0,1)\rightarrow (x^n,1)$. 
Then go to $(1,x^{-n})$, add $x$ and using multiplication $n$ times to arrive at $(x^n+x^{n+1},1)$. 
Finally, join $(x^n+x^{n+1},1)$ to $(x^{n+1},1)$ by following backward the path from $(0,1)\rightarrow (x^n,1)$.
We described all required paths to prove the connectedness of $C_f$.
\end{proof}

\newpage

\section{Appendix}

\vspace{-5pt}
Here is a table extracted from the unpublished memoir \cite{Ku20}. One finds several intriguing features, some of them discussed above and many of them unexplained. Two polynomials are grouped together if they define isomorphic graphs. 
The polynomials are arranged in lexicographical order, except for the fields of order $5^3$ and $5^4$ due to page layout reasons.
\begin{figure}[h!]
	\begin{minipage}[b]{0.5\textwidth} 
		\begin{center}
			\begin{tabular}{|c|c|c|} 
				\hline
				\multicolumn{2}{ |c| }{\scriptsize Irreducible monic polynomials} &  \\
				\multicolumn{2}{ |c| }{\scriptsize  with isomorphic graphs} &  \multirow{-2}{*}{\scriptsize Order of $\automorphism(X_f)$} \\
			 	\hline
					\multicolumn{3}{ |c| }{$2^2$}\\ 		
				\hline 
			 	\multicolumn{2}{ |r| }{$x^2 + x + 1$} & \multirow{1}{*}{2}\\
				\hline  
				 			\multicolumn{3}{ |c| }{$2^3$}\\
				\hline 
			 	\multicolumn{2}{ |r| }{$x^3 + x + 1$} & \multirow{1}{*}{144}\\
			 	\hline 
			 	\multicolumn{2}{ |r| }{$x^3 + x^2 + 1$} & \multirow{1}{*}{6}\\
				\hline 
							\multicolumn{3}{ |c| }{$2^4$}\\ 
				\hline 
			 	\multicolumn{2}{ |r| }{$x^4 + x + 1$} & \multirow{1}{*}{8}\\
			 	\hline 
			 	\multicolumn{2}{ |r| }{$x^4 + x^3 + 1$} & \multirow{1}{*}{4}\\
			 	\hline 
			 	\multicolumn{2}{ |r| }{$x^4 + x^3 + x^2 + x + 1$} & \multirow{1}{*}{4}\\
				\hline 
							\multicolumn{3}{ |c| }{$2^5$}\\ 
				\hline  
			 	\multicolumn{2}{ |r| }{$x^5 + x^2 + 1$} & \multirow{1}{*}{5}\\
			 	\hline 
			 	\multicolumn{2}{ |r| }{$x^5 + x^3 + 1$} & \multirow{1}{*}{5}\\
			 	\hline 
			 	\multicolumn{2}{ |r| }{$x^5 + x^3 + x^2 + x + 1$} & \multirow{1}{*}{5}\\
			 	\hline 
			 	\multicolumn{2}{ |r| }{$x^5 + x^4 + x^2 + x + 1$} & \multirow{1}{*}{5}\\
			 	\hline 
			 	\multicolumn{2}{ |r| }{$x^5 + x^4 + x^3 + x + 1$} & \multirow{1}{*}{5}\\
			 	\hline 
			 	\multicolumn{2}{ |r| }{$x^5 + x^4 + x^3 + x^2 + 1$} & \multirow{1}{*}{5}\\
			 	\hline
						    \multicolumn{3}{ |c| }{$3^2$}\\ 
				\hline 
			 	\multicolumn{2}{ |r| }{$x^2 + 1$} & \multirow{1}{*}{8}\\
			 	\hline 
			 	\multicolumn{2}{ |r| }{$x^2 + x + 2$} & \multirow{2}{*}{8}\\
				\multicolumn{2}{ |r| }{$x^2 + 2x + 2$}& \\
				\hline  
							\multicolumn{3}{ |c| }{$3^3$}\\ 
				\hline  
			 	\multicolumn{2}{ |r| }{$x^3 + 2x + 1$} & \multirow{1}{*}{6}\\
			 	\hline 
			 	\multicolumn{2}{ |r| }{$x^3 + 2x + 2$} & \multirow{1}{*}{6}\\
			 	\hline 
			 	\multicolumn{2}{ |r| }{$x^3 + x^2 + 2$} & \multirow{1}{*}{6}\\
			 	\hline 
			 	\multicolumn{2}{ |r| }{$x^3 + x^2 + x + 2$} & \multirow{1}{*}{6}\\
			 	\hline 
			 	\multicolumn{2}{ |r| }{$x^3 + x^2 + 2x + 1$} & \multirow{1}{*}{6}\\
			 	\hline 
			 	\multicolumn{2}{ |r| }{$x^3 + 2x^2 + 1$} & \multirow{1}{*}{6}\\
			 	\hline 
			 	\multicolumn{2}{ |r| }{$x^3 + 2x^2 + x + 1$} & \multirow{1}{*}{6}\\
			 	\hline 
			 	\multicolumn{2}{ |r| }{$x^3 + 2x^2 + 2x + 2$} & \multirow{1}{*}{6}\\
				\hline 
							\multicolumn{3}{ |c| }{$3^4$}\\ 
				\hline 
			 	\multicolumn{2}{ |r| }{$x^4 + x + 2$} & \multirow{1}{*}{8}\\
			 	\hline 
			 \end{tabular}
		 \end{center}
	\end{minipage}
	\begin{minipage}[b]{0.5\textwidth}
		\begin{center}
			 \begin{tabular}{|c|c|c|}
				\hline
				\multicolumn{2}{ |c| }{\scriptsize Irreducible monic polynomials} &  \\
				\multicolumn{2}{ |c| }{\scriptsize  with isomorphic graphs} &  \multirow{-2}{*}{\scriptsize Order of $\automorphism(X_f)$} \\
			 	\hline
			 	\multicolumn{2}{ |r| }{$x^4 + 2x + 2$} & \multirow{1}{*}{8}\\
			 	\hline 
			 	\multicolumn{2}{ |r| }{$x^4 + x^2 + 2$} & \multirow{2}{*}{512}\\
				\multicolumn{2}{ |r| }{$x^4 + 2x^2 + 2$} & \\
			 	\hline 
			 	\multicolumn{2}{ |r| }{$x^4 + x^2 + x + 1$} & \multirow{2}{*}{8}\\
				\multicolumn{2}{ |r| }{$x^4 + x^3 + x^2 + 1$} & \\
			 	\hline 
			 	\multicolumn{2}{ |r| }{$x^4 + x^2 + 2x + 1$} & \multirow{2}{*}{8}\\
				\multicolumn{2}{ |r| }{$x^4 + 2x^3 + x^2 + 1$} & \\
			 	\hline 
			 	\multicolumn{2}{ |r| }{$x^4 + x^3 + 2$} & \multirow{1}{*}{8}\\
			 	\hline
		 		\multicolumn{2}{ |r| }{$x^4 + x^3 + 2x + 1$} & \multirow{2}{*}{8}\\
				\multicolumn{2}{ |r| }{$x^4 + 2x^3 + x + 1$} & \\
		 		\hline 
		 		\multicolumn{2}{ |r| }{$x^4 + x^3 + x^2 + x + 1$} & \multirow{1}{*}{8}\\
		 		\hline 
			 	\multicolumn{2}{ |r| }{$x^4 + x^3 + x^2 + 2x + 2$} & \multirow{1}{*}{8}\\
			 	\hline 
			 	\multicolumn{2}{ |r| }{$x^4 + x^3 + 2x^2 + 2x + 2$} & \multirow{1}{*}{8}\\
				\hline
			 	\multicolumn{2}{ |r| }{$x^4 + 2x^3 + 2$} & \multirow{1}{*}{8}\\
			 	\hline 
			 	\multicolumn{2}{ |r| }{$x^4 + 2x^3 + x^2 + x + 2$} & \multirow{1}{*}{8}\\
			 	\hline 
			 	\multicolumn{2}{ |r| }{$x^4 + 2x^3 + x^2 + 2x + 1$} & \multirow{1}{*}{8}\\
			 	\hline 
			 	\multicolumn{2}{ |r| }{$x^4 + 2x^3 + 2x^2 + x + 2$} & \multirow{1}{*}{8}\\
			 	\hline
				\multicolumn{3}{ |c| }{$5^2$}\\ 
				\hline 
			 	\multicolumn{2}{ |r| }{$x^2 + 2$} & \multirow{2}{*}{16}\\
				\multicolumn{2}{ |r| }{$x^2 + 3$} & \\
			 	\hline 
			 	\multicolumn{2}{ |r| }{$x^2 + x + 1$} & \multirow{1}{*}{4}\\
			 	\hline 
			 	\multicolumn{2}{ |r| }{$x^2 + x + 2$} & \multirow{2}{*}{4}\\
				\multicolumn{2}{ |r| }{$x^2 + 3x + 3$} & \\
			 	\hline 
			 	\multicolumn{2}{ |r| }{$x^2 + 2x + 3$} & \multirow{2}{*}{4}\\
				\multicolumn{2}{ |r| }{$x^2 + 4x + 2$} & \\
			 	\hline 
			 	\multicolumn{2}{ |r| }{$x^2 + 2x + 4$} & \multirow{2}{*}{4}\\
				\multicolumn{2}{ |r| }{$x^2 + 3x + 4$} & \\
			 	\hline 
			 	\multicolumn{2}{ |r| }{$x^2 + 4x + 1$} & \multirow{1}{*}{4}\\
			 	\hline
				\multicolumn{3}{ |c| }{$5^3$}\\ 
				\multicolumn{3}{ |c|}{\small 40 polynomials with non-isomorphic graphs}\\[2pt]
				\multicolumn{3}{ |c|}{\small and an automorphism group of order 6.} \\ [2pt]
				 	\hline
			 \end{tabular}
		\end{center}
	\end{minipage}
\end{figure}

\newpage

\begin{figure}[h!]
	\begin{minipage}[b]{0.5\textwidth} 
  		\begin{center}
			\begin{tabular}{|c|c|c|}
				\hline 
				\multicolumn{2}{ |c| }{\scriptsize Irreducible monic polynomials} &  \\
				\multicolumn{2}{ |c| }{\scriptsize  with isomorphic graphs} &  \multirow{-2}{*}{\scriptsize Order of $\automorphism(X_f)$} \\
				\hline
				\multicolumn{3}{ |c| }{$5^4$}\\ 
				\hline 
			 	\multicolumn{2}{ |r| }{$x^4 + 2$} & \multirow{2}{*}{$\sim 3 \cdot 10^{47}$}\\
				\multicolumn{2}{ |r| }{$x^4 + 3$} & \\
			 	\hline 
			 	\multicolumn{2}{ |r| }{$x^4 + x^2 + 2$} & \multicolumn{1}{ c|}{\multirow{2}{*}{32768}}\\
				\multicolumn{2}{ |r| }{$x^4 + 3x^2 + 3$} & \\
				\hline
			 	\multicolumn{2}{ |r| }{$x^4 + 2x^2 + 3$} & \multicolumn{1}{ c|}{\multirow{2}{*}{32768}}\\
				\multicolumn{2}{ |r| }{$x^4 + 4x^2 + 2$} & \\
			 	\hline
			 	\multicolumn{2}{ |r| }{$x^4 + 2x^2 + 2x + 3$} & \multicolumn{1}{ c|}{\multirow{2}{*}{8}}\\
				\multicolumn{2}{ |r| }{$x^4 + 4x^3 + 4x^2 + 2$} & \\
			 	\hline
			 	\multicolumn{2}{ |r| }{$x^4 + 2x^2 + 3x + 3$} & \multicolumn{1}{ c|}{\multirow{2}{*}{8}}\\
				\multicolumn{2}{ |r| }{$x^4 + x^3 + 4x^2 + 2$} & \\
			 	\hline 
			 	\multicolumn{2}{ |r| }{$x^4 + 3x^2 + x + 3$} & \multicolumn{1}{ c|}{\multirow{2}{*}{8}}\\
				\multicolumn{2}{ |r| }{$x^4 + 2x^3 + x^2 + 2$} & \\
				\hline
			 	\multicolumn{2}{ |r| }{$x^4 + 3x^2 + 4x + 3$} & \multicolumn{1}{ c|}{\multirow{2}{*}{8}}\\
				\multicolumn{2}{ |r| }{$x^4 + 3x^3 + x^2 + 2$} & \\
				\hline
			 	\multicolumn{2}{ |r| }{$x^4 + x^3 + 2x + 4$} & \multicolumn{1}{ c|}{\multirow{2}{*}{8}}\\
				\multicolumn{2}{ |r| }{$x^4 + 3x^3 + 4x + 4$} & \\
			 	\hline
				\multicolumn{2}{ |r| }{$x^4 + x^3 + 4x + 1$} & \multicolumn{1}{ c|}{\multirow{2}{*}{32}}\\
				\multicolumn{2}{ |r| }{$x^4 + 4x^3 + x + 1$} & \\
			 	\hline
			 	\multicolumn{2}{ |r| }{$x^4 + x^3 + x^2 + 2x + 4$} & \multicolumn{1}{ c|}{\multirow{2}{*}{8}}\\
				\multicolumn{2}{ |r| }{$x^4 + 3x^3 + 4x^2 + 4x + 4$} & \\
			 	\hline 
			 	\multicolumn{2}{ |r| }{$x^4 + x^3 + x^2 + 3x + 3$} & \multicolumn{1}{ c|}{\multirow{2}{*}{8}}\\
				\multicolumn{2}{ |r| }{$x^4 + x^3 + 2x^2 + 2x + 2$} & \\
			 	\hline
			 	\multicolumn{2}{ |r| }{$x^4 + x^3 + x^2 + 4x + 2$} & \multicolumn{1}{ c|}{\multirow{2}{*}{8}}\\
				\multicolumn{2}{ |r| }{$x^4 + 2x^3 + 3x^2 + 3x + 3$} & \\
			 	\hline 
			 	\multicolumn{2}{ |r| }{$x^4 + x^3 + 2x^2 + x + 3$} & \multicolumn{1}{ c|}{\multirow{2}{*}{8}}\\
				\multicolumn{2}{ |r| }{$x^4 + 2x^3 + 4x^2 + 2x + 2$} & \\
			 	\hline 
			 	\multicolumn{2}{ |r| }{$x^4 + x^3 + 2x^2 + 3x + 4$} & \multicolumn{1}{ c|}{\multirow{2}{*}{8}}\\
				\multicolumn{2}{ |r| }{$x^4 + 2x^3 + 3x^2 + 4x + 4$} & \\
			 	\hline
			 	\multicolumn{2}{ |r| }{$x^4 + x^3 + 4x^2 + 4x + 1$} & \multicolumn{1}{ c|}{\multirow{2}{*}{8}}\\
				\multicolumn{2}{ |r| }{$x^4 + 4x^3 + 4x^2 + x + 1$} & \\
			 	\hline
			 	\multicolumn{2}{ |r| }{$x^4 + 2x^3 + x + 4$} & \multicolumn{1}{ c|}{\multirow{2}{*}{8}}\\
				\multicolumn{2}{ |r| }{$x^4 + 4x^3 + 3x + 4$} & \\
			 	\hline
			  	\multicolumn{2}{ |r| }{$x^4 + 2x^3 + x^2 + 3x + 1$} & \multicolumn{1}{ c|}{\multirow{2}{*}{8}}\\
				\multicolumn{2}{ |r| }{$x^4 + 3x^3 + x^2 + 2x + 1$} & \\
				\hline
			 	\multicolumn{2}{ |r| }{$x^4 + 2x^3 + 3x^2 + x + 2$} & \multicolumn{1}{ c|}{\multirow{2}{*}{8}}\\
				\multicolumn{2}{ |r| }{$x^4 + 3x^3 + 4x^2 + x + 3$} & \\
			 	\hline 
			 	\multicolumn{2}{ |r| }{$x^4 + 2x^3 + 4x^2 + x + 4$} & \multicolumn{1}{ c|}{\multirow{2}{*}{8}}\\
				\multicolumn{2}{ |r| }{$x^4 + 4x^3 + x^2 + 3x + 4$} & \\
			 	\hline 
			 	\multicolumn{2}{ |r| }{$x^4 + 2x^3 + 4x^2 + 4x + 3$} & \multicolumn{1}{ c|}{\multirow{2}{*}{8}}\\
				\multicolumn{2}{ |r| }{$x^4 + 3x^3 + 3x^2 + 4x + 2$} & \\
			 	\hline
			 	\multicolumn{2}{ |r| }{$x^4 + 3x^3 + 3x^2 + x + 4$} & \multicolumn{1}{ c|}{\multirow{2}{*}{8}}\\
				\multicolumn{2}{ |r| }{$x^4 + 4x^3 + 2x^2 + 2x + 4$} & \\
			 	\hline 
			 	\multicolumn{2}{ |r| }{$x^4 + 3x^3 + 3x^2 + 2x + 3$} & \multicolumn{1}{ c|}{\multirow{2}{*}{8}}\\ [1pt]
				\multicolumn{2}{ |r| }{$x^4 + 4x^3 + x^2 + x + 2$} & \\ [1pt]
				\hline
			 \end{tabular}
		\end{center}
	\end{minipage}
	\begin{minipage}[b]{0.5\textwidth}
		\begin{center}
 \begin{tabular}{|c|c|c|}
	\hline 
	\multicolumn{2}{ |c| }{\scriptsize Irreducible monic polynomials} &  \\
	\multicolumn{2}{ |c| }{\scriptsize  with isomorphic graphs} &  \multirow{-2}{*}{\scriptsize Order of $\automorphism(X_f)$} \\
 	\hline
 	\multicolumn{2}{ |r| }{$x^4 + 3x^3 + 4x^2 + 3x + 2$} & \multicolumn{1}{ c|}{\multirow{2}{*}{8}}\\
	\multicolumn{2}{ |r| }{$x^4 + 4x^3 + 2x^2 + 4x + 3$} & \\
 	\hline 
 	\multicolumn{2}{ |r| }{$x^4 + 4x^3 + x^2 + 2x + 3$} & \multicolumn{1}{ c|}{\multirow{2}{*}{8}}\\
	\multicolumn{2}{ |r| }{$x^4 + 4x^3 + 2x^2 + 3x + 2$} & \\
 	\hline 
 	\multicolumn{2}{ |r| }{$x^4 + x + 4$} & \multirow{1}{*}{8}\\
 	\hline 
 	\multicolumn{2}{ |r| }{$x^4 + 2x + 4$} & \multirow{1}{*}{8}\\
 	\hline 
 	\multicolumn{2}{ |r| }{$x^4 + 3x + 4$} & \multirow{1}{*}{8}\\
 	\hline
 	\multicolumn{2}{ |r| }{$x^4 + 4x + 4$} & \multicolumn{1}{ c|}{\multirow{1}{*}{8}} \\
  	\hline
 	\multicolumn{2}{ |r| }{$x^4 + x^2 + x + 1$} & \multicolumn{1}{ c|}{\multirow{1}{*}{8}}\\
 	\hline 
 	\multicolumn{2}{ |r| }{$x^4 + x^2 + 2x + 2$} & \multicolumn{1}{ c|}{\multirow{1}{*}{8}}\\
 	\hline 
 	\multicolumn{2}{ |r| }{$x^4 + x^2 + 2x + 3$} & \multicolumn{1}{ c|}{\multirow{1}{*}{8}}\\
 	\hline 
 	\multicolumn{2}{ |r| }{$x^4 + x^2 + 3x + 2$} & \multicolumn{1}{ c|}{\multirow{1}{*}{8}}\\
 	\hline 
 	\multicolumn{2}{ |r| }{$x^4 + x^2 + 3x + 3$} & \multicolumn{1}{ c|}{\multirow{1}{*}{8}}\\
 	\hline 
 	\multicolumn{2}{ |r| }{$x^4 + x^2 + 4x + 1$} & \multicolumn{1}{ c|}{\multirow{1}{*}{8}}\\
 	\hline
 	\multicolumn{2}{ |r| }{$x^4 + 2x^2 + 2x + 1$} & \multicolumn{1}{ c|}{\multirow{1}{*}{8}}\\
 	\hline 
 	\multicolumn{2}{ |r| }{$x^4 + 2x^2 + 3x + 1$} & \multicolumn{1}{ c|}{\multirow{1}{*}{8}}\\
 	\hline  
 	\multicolumn{2}{ |r| }{$x^4 + 3x^2 + x + 1$} & \multicolumn{1}{ c|}{\multirow{1}{*}{8}}\\
	\hline
 	\multicolumn{2}{ |r| }{$x^4 + 3x^2 + 4x + 1$} & \multicolumn{1}{ c|}{\multirow{1}{*}{8}}\\
 	\hline
 	\multicolumn{2}{ |r| }{$x^4 + 4x^2 + x + 2$} & \multicolumn{1}{ c|}{\multirow{1}{*}{8}}\\
 	\hline 
 	\multicolumn{2}{ |r| }{$x^4 + 4x^2 + x + 3$} & \multicolumn{1}{ c|}{\multirow{1}{*}{8}}\\
 	\hline 
	\multicolumn{3}{ |c|}{\small The remaining polynomials have}\\
	\multicolumn{3}{ |c|}{\small non-isomorphic graphs and} \\
	\multicolumn{3}{ |c|}{\small an automorphism group of order 8.} \\
	\hline 
	\multicolumn{3}{ |c| }{$7^2$}\\ 
 	\hline 
 	\multicolumn{2}{ |r| }{$x^2 + 1$} & \multirow{1}{*}{32}\\
 	\hline 
 	\multicolumn{2}{ |r| }{$x^2 + 2$} & \multirow{2}{*}{32}\\
	\multicolumn{2}{ |r| }{$x^2 + 4$} & \\
 	\hline 
 	\multicolumn{2}{ |r| }{$x^2 + x + 3$} & \multirow{2}{*}{4}\\
	\multicolumn{2}{ |r| }{$x^2 + 5x + 5$} & \\
 	\hline 
 	\multicolumn{2}{ |r| }{$x^2 + x + 4$} & \multirow{2}{*}{8}\\
	\multicolumn{2}{ |r| }{$x^2 + 2x + 2$} & \\
 	\hline 
 	\multicolumn{2}{ |r| }{$x^2 + x + 6$} & \multirow{2}{*}{4}\\
	\multicolumn{2}{ |r| }{$x^2 + 6x + 6$} & \\
 	\hline 
 	\multicolumn{2}{ |r| }{$x^2 + 2x + 3$} & \multirow{2}{*}{4}\\
	\multicolumn{2}{ |r| }{$x^2 + 3x + 5$} & \\
 	\hline 
 	\multicolumn{2}{ |r| }{$x^2 + 2x + 5$} & \multirow{2}{*}{4}\\
	\multicolumn{2}{ |r| }{$x^2 + 6x + 3$} & \\
 	\hline 
 	\multicolumn{2}{ |r| }{$x^2 + 3x + 1$} & \multirow{1}{*}{8}\\
 	\hline 
 	\multicolumn{2}{ |r| }{$x^2 + 3x + 6$} & \multirow{2}{*}{4}\\
	\multicolumn{2}{ |r| }{$x^2 + 4x + 6$} & \\
 	\hline 
 	\multicolumn{2}{ |r| }{$x^2 + 4x + 1$} & \multirow{1}{*}{8}\\
 	\hline 
 	\multicolumn{2}{ |r| }{$x^2 + 4x + 5$} & \multirow{2}{*}{4}\\
	\multicolumn{2}{ |r| }{$x^2 + 5x + 3$} & \\
 	\hline 
 	\multicolumn{2}{ |r| }{$x^2 + 5x + 2$} & \multirow{2}{*}{8}\\
	\multicolumn{2}{ |r| }{$x^2 + 6x + 4$} & \\
	\hline 
\end{tabular}
\end{center}
\end{minipage}
\end{figure}

\newpage

Section de mathématiques, Université de Genève, 2-4 Rue du Lièvre,
Case Postale 64, 1211 Genève 4, Suisse 

e-mails: anders.karlsson@unige.ch, kuhn.gaetan@protonmail.ch

and

Matematiska institutionen, Uppsala universitet, Box 256, 751 05 Uppsala,
Sweden 

e-mail: anders.karlsson@math.uu.se
\end{document}